\newtheorem{theorem}{Theorem}[section]
\newtheorem{thm}{Theorem}[section]
\newtheorem{lemma}{Lemma}[section]
\newtheorem{alg}[thm]{Algorithm}
\newcommand{\R}{\mathbb R}
\newcommand{\lip}[2]{\left({}#1,#2\right){}}
\newcommand{\curly}[1]{\left\{{}#1\right\}}
\newcommand{\eps}{\varepsilon}
\newcommand{\inv}{^{-1}}
\newcommand{\norm}[1]{\left|\left|{}#1\right|\right|}
\newcommand{\trinorm}[1]{{\left\vert\kern-0.2ex\left\vert\kern-0.2ex\left\vert #1 
    \right\vert\kern-0.2ex\right\vert\kern-0.2ex\right\vert}}
\newcommand{\abs}[1]{\left|{}#1\right|} 
\newcommand{\pare}[1]{\left({}#1\right)}
\newcommand{\brak}[1]{\left[{}#1\right]}
\newcommand{\blue}[1]{\textcolor{black}{#1}}
\newcommand{\bmat}[1]{\begin{bmatrix} #1 \end{bmatrix}}
\newcommand{\divergence}{\nabla \cdot}
\newcommand{\omint}{\int_{\Omega}}
\newcommand{\chalf}{^{n+\frac{1}{2}}}
\newcommand{\stokes}[1]{I_h^{St}\left({}#1\right)}
\newtheorem{remark}{Remark}[section]
\newcommand{\be}{{e}}
\newcommand{\bw}{{w}}
\newcommand{\bD}{{D}}
\newcommand{\bX}{{X}}
\newcommand{\bY}{{Y}}
\newcommand{\bfphi}{\ensuremath{{\phi}}}
\newcommand{\bfeta}{\ensuremath{{\eta}}}
\newcommand{\bfchi}{\ensuremath{{\chi}}}
\newcommand{\uh}{\ensuremath{{u}_h}}
\newcommand{\delt}{\ensuremath{\Delta t}}
\newcommand{\Grad}{\ensuremath{\nabla}}
\newcommand{\bea}{\begin{eqnarray}}
\newcommand{\eea}{\end{eqnarray}}
\newcommand{\beas}{\begin{eqnarray*}}
\newcommand{\eeas}{\end{eqnarray*}}
\numberwithin{equation}{section}
\begin{document}

\title{Improved long time accuracy for projection methods for Navier-Stokes equations using EMAC formulation}

\author{
Sean Ingimarson
 \thanks{School of Mathematical Sciences and Statistics, Clemson University, Clemson, SC 29634 (singima@clemson.edu); partially supported by NSF grant DMS 2011490}
 \and 
Monika Neda
\thanks{Department of Mathematical Sciences, University of Nevada Las Vegas, Las Vegas, NV 89154 (Monika.Neda@unlv.edu)}
\and
 Leo Rebholz
 \thanks{School of Mathematical Sciences and Statistics, Clemson University, Clemson, SC 29634 (rebholz@clemson.edu); partially supported by NSF grant DMS 2011490}
\and  
 Jorge Reyes
 \thanks{Department of Mathematical Sciences, University of Nevada Las Vegas, Las Vegas, NV 89154 (Jorge.Reyes@unlv.edu)}
 \and
 An Vu
 \thanks{Department of Mathematics, University of Houston, Houston, TX 77004 (atvu15@uh.edu)}
 }

\date{}
 
\maketitle

\abstract{
We consider a pressure correction temporal discretization for the incompressible Navier-Stokes equations in EMAC form.  We prove stability and error estimates for the case of mixed finite element spatial discretization, and in particular that the Gronwall constant's exponential dependence on the Reynolds number is removed (for sufficiently smooth true solutions) or at least significantly reduced compared to the commonly used skew-symmetric formulation.  We also show the method preserves momentum and angular momentum, and while it does not preserve energy it does admit an energy inequality.  Several numerical tests show the advantages EMAC can have over other commonly used formulations of the nonlinearity.  Additionally, we discuss extensions of the results to the usual Crank-Nicolson temporal discretization.
}

\section{Introduction}

It is widely accepted that the Navier-Stokes equations (NSE) determine the evolution of incompressible, viscous, Newtonian fluid flow.  These equations are given by
\begin{align}
u_t+u \cdot \nabla u+\nabla p-\nu \Delta u &=f,\label{eq:Strong_NSE1}\\
\divergence u&=0,\label{eq:Strong_NSE2}
\end{align}
where $u$ and $p$ represent velocity and pressure respectively, $f$ is an external force, and $\nu$ represents kinematic viscosity, which is inversely proportional to the Reynolds number $Re$.  Appropriate boundary and initial conditions are needed to close the system.

While the NSE are built from conservation of linear momentum and mass conservation, they are also well-known to conserve energy, angular momentum, enstrophy in 2D, helicity in 3D, \textcolor{black}{among other important physical quantities} \cite{GS98}.  By `conserve' we refer to the case of no viscous or external forces, but if these forces are present than an exact balance can be derived where the nonlinearity plays no role.  In addition to being conserved, these quantities are believed to play a critical role in flow structure development, the energy cascade and energy dissipation, and the microscale \cite{F95,GS98,R07}.  However, in most NSE simulations, very few or none of these quantities are exactly conserved \cite{CHOR17,CHOR19}.  Often, energy is at least bounded, as this is required for numerical stability.  However, in most finite element computations mass is only weakly conserved \cite{JLMNR17}, and this in turn breaks the conservation of momentum, angular momentum and other important physical quantities \cite{CHOR17}.  One solution to this problem is to use strongly divergence-free discretizations, such as Scott-Vogelius finite elements, however these elements can require mesh restrictions and higher degree polynomials, especially in the case of quadrilateral elements.

Another approach is to change the form of the nonlinearity to the EMAC (Energy, Momentum, and Angular momentum Conserving) form proposed in \cite{CHOR17}, where the identity
\begin{align*}
u \cdot \nabla u+\nabla p=2D(u)u+(\divergence u)u+\nabla P,
\end{align*}
was derived, with $P=p-\frac{1}{2}|u|^2$.  There it was shown that the NSE with this nonlinear formulation used in \eqref{eq:Strong_NSE1}-\eqref{eq:Strong_NSE2} and discretized with standard elements such as Taylor-Hood or the mini element, conserves energy, momentum and angular momentum, as well as particular definitions of 2D enstrophy and \textcolor{black}{3D} helicity.  This is in contrast to the more commonly used rotational, skew-symmetric, convective and conservative forms, none of which conserve all of energy, momentum and angular momentum \cite{CHOR17}.  

Since the original EMAC paper \cite{CHOR17} in 2017, EMAC has garnered a considerable amount of attention in the CFD community.  It has been used in problems involving vortex-induced vibration \cite{PCLRH18}, turbulent flow simulation \cite{LHOCR19}, noise radiated by an open cavity \cite{MSLGD19}, high Reynolds number vortex dynamics \cite{SL18}, and more \cite{SP18,SP18b,OCAM19,LPH19,Belding22}.  These numerical results have all been quite favorable, but there is still much to be done for its analytical study.  What is proven so far is results for conservation properties \cite{CHOR19}, stability and convergence \cite{CHOR19}, efficient algorithms and linearization development \cite{CHOR19}, and a longer time accuracy result that shows the Gronwall exponent from EMAC is independent of the viscosity \cite{OR20}.

The purpose of this paper is to extend the study of EMAC to the case of a projection method temporal discretizations together with finite element spatial discretization.  Projection methods were originally developed by Temam \cite{T69} and Chorin \cite{Cho68}, and work using a Hodge type decomposition idea to split the NSE into two steps: the first solves the momentum equation without a divergence-free constraint, and the second projects the step 1 solution into the divergence free space.  There have been many improvements made to projection methods over the years\footnote{The folklore, as told to author LR by a former Chorin student, is that for many years no Chorin student was allowed to graduate without improving on projection methods.}, but they all are still based on the fundamental decomposition \textcolor{black}{/} splitting from the original development.  Analysis of projection methods is rather different and more complex than for standard coupled schemes, see e.g. \cite{GMS06,P97,shen1992error}, and herein we will extend the study of EMAC discretizations using to projection methods.

\textcolor{black}{This paper is organized as follows. In section 2, we provide mathematical notation and preliminary information for the analysis. In section 3 we introduce the projection method algorithm and show the conservation properties of it. Stability and error analysis are presented in section 4. Section 5 further extends out work to coupled schemes for both EMAC and SKEW. Numerical tests can be found in section 6 followed by concluding remarks in the last section 7. }


\section{Notation and Preliminaries}
\label{sec:Notation}

We present in this section the necessary notation and mathematical preliminaries for a smooth analysis to follow.
We assume a convex polygonal (or smooth boundary) domain $\Omega \subseteq \R^d$ where $d=2,3$.  The $L^2(\Omega)$ inner product is denoted as $\lip{\cdot}{\cdot}$ and the $L^2(\Omega)$ norm with $\norm{\cdot}$.  Other norms will be clearly labeled with subscripts.

The natural velocity and pressure spaces for NSE are respectively denoted 
\begin{align*}
X=\curly{v \in H^1(\Omega), \: v|_{\partial \Omega}=0},
\qquad
Q=\curly{q \in L^2(\Omega), \: \omint q\: dx=0}.
\end{align*}
The dual of $X$ norm is defined as
\begin{align*}
X'=H\inv(\Omega), \qquad \norm{f}_{X'}=\sup_{v \in X'}\frac{\lip{f}{v}}{\norm{v}_X}.
\end{align*}
The divergence-free subspace of $X$ will be denoted by
\begin{align*}
V=\curly{v \in X, \: \lip{\divergence v}{q}=0, \: \forall q \in Q}.
\end{align*}
For projection methods, the space 
\[
Y = \{ v \in L^2(\Omega),\ \nabla \cdot v \in L^2(\Omega), \ v \cdot \hat n |_{\partial \Omega}=0 \}\textcolor{black}{,}
\]
is also utilized.

We use the following notation for nonlinear terms.  Let $c:X\times X\times X \rightarrow \mathbb{R}$ be defined by
\[
c(u,v,w):= (2D(u)v,w) + ((\nabla \cdot u)v,w).
\]
This formulation corresponds to the EMAC nonlinearity.  For skew-symmetric (SKEW), we define $b^*:\textcolor{black}{X}\times X\times X \rightarrow \mathbb{R}$ by
\[
b^*(u,v,w):= (u\cdot\nabla v,w) + \frac12 ((\nabla \cdot u)v,w).
\]
For $c$ and $b^*$ we have the bound from H\"older that
\begin{align*}
    c(u,v,w) & \le 2 \| D(u) \| \| v \|_{L^6} \| w \|_{L^3} + \| \nabla u \| \| v \|_{L^6} \| w \|_{L^3}, \\
    b^*(u,v,w)& \le  \| u \|_{L^6} \| \nabla v \| \| w \|_{L^3} + \frac12 \| \nabla u \| \| v \|_{L^6} \| w \|_{L^3},
\end{align*}
and thus with standard Sobolev inequalities we obtain the same bound for both $c$ and $b^*$ formulations,
\begin{align}
 b^*(u,v,w) & \le C \| \nabla u \| \| \nabla v \| \| w \|^{1/2} \| \nabla w \|^{1/2}, \mbox{ and } \nonumber \\
 c(u,v,w) & \le C \| \nabla u \| \| \nabla v \| \| w \|^{1/2} \| \nabla w \|^{1/2}. \label{cbound}
\end{align}

\subsection{Discretization Preliminaries}

We assume a regular conforming triangulation $\tau_h$ of $\Omega$, where $h$ is the global mesh size.  We further consider $X_h \subset X$ and $Q_h \subset Q$ as finite element velocity and pressure spaces.  We further assume that
$X_h=X\cap P_k(\tau_h)$ and $Q_h=Q\cap P_{k-1}(\tau_h)\cap C^0(\Omega)$.  On these spaces, the following condition holds \cite{GP08}: \textcolor{black}{there exists $\beta>0$ independent of $h$ such that}
\begin{equation}
\beta \| \nabla q \| \le \sup_{0\ne v \in X_h} \frac{(\nabla q,v)}{\| v \| } , \ \forall q \in Q_h. \label{LBB2}
\end{equation}
Note that \eqref{LBB2} is a stronger assumption than the usual inf-sup condition, in fact it implies it \cite{GP08}. \textcolor{black}{Besides Taylor-Hood we note that there are other element choices such as the mini element which would also satisfy \eqref{LBB2}.}

We also define the discretely divergence-free subspace of $X_h$ as
\begin{align*}
V_h=\curly{v_h \in X_h, \: \lip{\divergence v_h}{q_h}=0, \: \forall q_h \in Q_h},
\end{align*}
and the space $Y_h = Y \cap P_k(\tau_h)$.  Note that $X_h\subset Y_h$, and thus \eqref{LBB2} holds with $X_h$ replaced by $Y_h$.

For functions $v(x,t)$ and $1\leq p \leq \infty$, we use the notation
\begin{align*}
&\trinorm{v}_{\infty,k}=\max_{0\leq n \leq M}\norm{v^n}_k, \qquad \qquad\trinorm{ v }_{p,k}=\pare{\Delta t \sum_{n=0}^M\norm{v^n}_k^p }^{\frac{1}{p}},\\ 
& \qquad \qquad \qquad\trinorm{ v_{1/2} }_{p,k}=\pare{\sum_{n=0}^M\norm{v^{n-1/2}}_k^p\Delta t}^{\frac{1}{p}} \textcolor{black}{.}
\end{align*}

Denote $I_h^{St}$ as the discrete Stokes projection operator \cite{SL17}, which is defined by: Given $\phi \in H^1(\Omega)$, find $\stokes{\phi}\in V_h$ satisfying
\begin{align}
\lip{\nabla \stokes{\phi}}{\nabla v_h}=\lip{\nabla \phi}{\nabla v_h}, \quad \forall v_h \in V_h.
\label{eq:stokes}
\end{align}
For $\phi\in V \cap H^{k+1}(\Omega)$, this operator is known to have the following optimal approximation properties:
\begin{align}
\begin{split}
\norm{\phi-\stokes{\phi}}\leq Ch^{k+1}\norm{\phi}_{k+1},\\
\norm{\nabla (\phi - \stokes{\phi} ) }\leq Ch^k\norm{\phi}_{k+1}.
\end{split}
\end{align}
Our analysis will also utilize the following bound proven in \cite{girault2015max}:
\begin{align}
\norm{\nabla I_h^{St}(\phi)}_{L^r}\leq C\norm{\nabla \phi}_{L^r}, \quad r \in [2,\infty].
\label{eq:Stokes_approx}
\end{align}

\section{Projection Methods using EMAC}
\label{sec:Proj_Methods}

We now study projection methods for NSE with the EMAC nonlinearity.  For simplicity, we consider the backward Euler finite element scheme, \textcolor{black}{ but we note that the ideas can also be applied to higher order time discretization of projection methods \cite{GMS06, P08}.}

\begin{alg}[EMAC Projection Method: EMAC-BE-PROJ] \label{beleproj} \ \\
Let $f \in L^{\infty}(0,T;H^{-1}(\Omega))$, solenoidal initial condition $u^0 \in L^2(\Omega)$ {\color{black} satisfying no-slip boundary conditions}, $u_h^0=\widetilde{u}_h^0$ defined to be the $L^2$ projection of $u^0$ into $V_h$,  end time $T$, and number of time steps $M$ be given.  Set $\Delta t=T/M$, and for $n=0,1,2,...,M-1$,\\ \ \\
Step 1 EMAC: Find $\widetilde{u}_h^{n+1}\in X_h$ satisfying, for all $\bfchi_h\in X_h$,
\begin{align}
\frac{1}{\Delta t}(\tilde u_h^{n+1}- {u}_h^n,\bfchi_h) + c(\tilde u_h^{n+1},\tilde u_h^{n+1},\bfchi_h) + \nu(\nabla \tilde u_h^{n+1},\nabla \bfchi_h)
 =  (f (t^{n+1}),\bfchi_h ). \label{projemac1}
\end{align}
Step 2: Find $({u}_h^{n+1},{P}_h^{n+1})\in (Y_h,Q_h)$ satisfying, for all $(w_h,q_h)\in (Y_h,Q_h)$,
\begin{subequations}
\label{proj23}
\begin{align}
\frac{1}{\Delta t}({u}_h^{n+1},w_h) - (P_h^{n+1},\nabla \cdot w_h) & =  \frac{1}{\Delta t}(\widetilde{u}_h^{n+1},w_h), \label{proj2} \\
(\nabla \cdot {u_h}^{n+1},q_h) & =  0. \label{proj3}
\end{align}
\end{subequations}
\end{alg}

\begin{remark}
For the more commonly used skew-symmetric form projection method (SKEW-BE-PROJ), Step 1 EMAC would be replaced by Step 1 SKEW: Find $\widetilde{u}_h^{n+1}\in X_h$ satisfying, for all $\bfchi_h\in X_h$,
\begin{align}
\frac{1}{\Delta t}(\tilde u_h^{n+1}- {u}_h^n,\bfchi_h) + b^{*}(\tilde u_h^{n+1},\tilde u_h^{n+1},\bfchi_h) + \nu(\nabla \tilde u_h^{n+1},\nabla \bfchi_h)
 =  (f (t^{n+1}),\bfchi_h ). \label{projskew1}
\end{align}
Although step 2 of SKEW-BE-PROJ is the same as for EMAC-BE-PROJ, the interpretation of the pressure term is different.  In SKEW-BE-PROJ, it represents usual pressure, while for EMAC-BE-PROJ, it represents $p-\frac12|u|^2$.

One could also linearize via $b^{\ast}(u_h^{n},\tilde u_h^{n+1},\bfchi_h)$ or $b^{\ast}(\tilde u_h^{n},\tilde u_h^{n+1},\bfchi_h)$ for efficiency, {\color{black} although this could create additional error sources including an additional first order consistency term and can nonphysically alter  conservation of linear and angular momentum conservation in certain settings.  The extent to which these errors are significant is problem dependent. 

A fully explicit treatment of the nonlinear term would have a much more significant effect on the analysis, as it would create additional consistency error terms, a violation of energy and angular momentum conservation, and a time step restriction for stability and accuracy.  However, an interesting open question is how an explicit treatment using Scalar Auxilliary Variable technique (see e.g. \cite{LS20}) might be analyzed with respect to EMAC and conservation properties.}
\end{remark}

\subsection{Conservation properties of EMAC-BE-PROJ}
\label{sec:Proj_conservation}

It is well known that smooth solutions to the NSE conserve important quantities such as energy, momentum, and angular momentum, which are defined as
\begin{align*}
\text{Kinetic energy}\quad & E:= \frac{1}{2} \int_\Omega |u|^2\: dx;\\
\text{Linear momentum}\quad& M:=\int_\Omega u \: dx;\\
\text{Angular momentum}\quad& M_X:=\int_\Omega u\times x \: dx.
\end{align*}

We now consider these conservation laws for EMAC-BE-PROJ, and for comparison also SKEW-BE-PROJ.

\subsubsection{Energy inequality}

While neither SKEW-BE-PROJ nor EMAC-BE-PROJ conserve energy, they both admit an energy inequality.  This can be seen by choosing $\bfchi_h=\tilde u_h^{n+1}$ in \eqref{projskew1} for SKEW-BE-PROJ  and \eqref{projemac1} for EMAC-BE-PROJ, which both yield
\[
\frac{1}{2\Delta t} \left( \| \tilde u_h^{n+1} \|^2 - \| u_h^n \|^2 + \| \tilde u_h^{n+1} - u_h^n \|^2 \right) + \nu \| \nabla \tilde u_h^{n+1} \|^2 = (f(t^{n+1}),\tilde u_h^{n+1}),
\]
thanks to the polarization identity and $b^{\ast}(\tilde u_h^{n+1},\tilde u_h^{n+1},\tilde u_h^{n+1}) =c(\tilde u_h^{n+1},\tilde u_h^{n+1},\tilde u_h^{n+1})=0$.  Step 2 is an $L^2$ projection from $X_h\subset Y_h$ into the discretely divergence free subspace of $Y_h$, and thus $\| u_h^{n} \| \le \|\tilde u_h^{n} \|$.  Using this bound along with a standard treatment of the right hand side term, we obtain
\[
\frac{1}{\blue{2}\Delta t} \left( \| \tilde u_h^{n+1} \|^2 - \| \tilde u_h^n \|^2 + \| \tilde u_h^{n+1} - u_h^n \|^2 \right) + \nu \| \nabla \tilde u_h^{n+1} \|^2 \le  \nu^{-1} \| f(t^{n+1}) \|_{\blue{X'}}^2,
\]
and after summing over time steps,
\begin{equation}
\| \tilde u_h^{M} \|^2 + \nu \left( \Delta t \sum_{n=1}^M \| \nabla \tilde u_h^n \|^2 \right) +   \left(  \sum_{n=1}^M \| \tilde u_h^n - u_h^{n-1} \|^2 \right) \le \| \tilde u_h^0 \|^2 +  \nu^{-1}  \left( \Delta t \sum_{n=1}^M \|  f(t^{n+1}) \|_{\blue{X'}}^2 \right). \label{E1}
\end{equation}
Hence we find for both SKEW-BE-PROJ and EMAC-BE-PROJ an energy inequality instead of an equality.  This is due to dissipation from backward Euler time stepping and in the projection step, since backward Euler produces the third left hand side term in \eqref{E1}, while the projection step yielded the important bound  $\| u_h^{n} \| \le \|\tilde u_h^{n} \|$.   However, that inequality was possible due to the nonlinear terms in both SKEW-BE-PROJ and EMAC-BE-PROJ preserving energy.

\subsubsection{Momentum conservation}

To study momentum conservation properties, we consider a simplified setting where the data and solution vanishes in a strip along the boundary.  For example, one can consider the situation of an isolated spinning vortex, with the boundary sufficiently far away.  Of course, boundaries play an important role in balances of these quantities,  however considering this simplified setting will reveal how a scheme behaves away from the boundaries and in particular whether the nonlinear term preserves momentum or nonphysically contributes to it. Hence, for a given mesh $\tau_h$ of $\Omega = \overline{\Omega} \oplus \Omega_s$, where $\Omega_s$ is the part of the domain overlapping with the strip of elements along the boundary.  {\color{black} Define ${\psi_i}_h \in X_h$ to be the standard $i^{th}$ basis vector in all of the domain except in the final strip of the mesh along the boundary, where it decays to 0.}

From Step 2 of SKEW-BE-PROJ and EMAC-BE-PROJ, since ${\psi_i}_h \in X_h\subset Y_h$, choosing $w_h={\psi_i}_h$ yields
\[
(u_h^{n+1},{\psi_i}_h) - \Delta t (p_h^{n+1},\nabla \cdot {\psi_i}_h) = (\tilde u_h^{n+1},{\psi_i}_h),
\]
and since $\nabla \cdot \psi_i=0$,
\[
 \Delta t (p_h^{n+1},\nabla \cdot {\psi_i}_h)  =  \Delta t (p_h^{n+1},\nabla \cdot {\psi_i}_h)_{\Omega_s} = 0 \textcolor{black}{,}
 \]
 with the last quantity vanishing since we assume the solution vanishes in $\Omega_s$.  Hence
 \begin{equation}
 (u_h^{n+1},{\psi_i}_h) = (\tilde u_h^{n+1},{\psi_i}_h). \label{M1}
\end{equation}

Consider now choosing $\bfchi_h={\psi_i}_h$ in Step 1 of  EMAC-BE-PROJ.  Using \eqref{M1}, we obtain
\begin{align}
\frac{1}{\Delta t}(\tilde u_h^{n+1}- \tilde {u}_h^n,{\psi_i}_h) + c(\tilde u_h^{n+1},\tilde u_h^{n+1},{\psi_i}_h) + \nu(\nabla \tilde u_h^{n+1},\nabla {\psi_i}_h)
 =  ( f (t^{n+1}),{\psi_i}_h ).
\end{align}
Using that the solution and data vanish in $\Omega_s$, we can write
\begin{align}
\frac{1}{\Delta t}(\tilde u_h^{n+1}- \tilde {u}_h^n, \psi_i ) + c(\tilde u_h^{n+1},\tilde u_h^{n+1},{\psi_i})
 =  (f (t^{n+1}),{\psi_i} ),
\end{align}
noting the viscous term drops since $\nabla \psi_i$ vanishes.  The nonlinear term is also zero, as shown in \cite{CHOR17}, but which we show now for completeness.  Expanding $c$, we get
\begin{align*}
 c(\tilde u_h^{n+1},\tilde u_h^{n+1},{\psi_i}) & =  2(D(\tilde u_h^{n+1}) \tilde u_h^{n+1},{\psi_i}) +  ( (\nabla \cdot \tilde u_h^{n+1})\tilde u_h^{n+1},{\psi_i})  \\
& =  (\tilde u_h^{n+1}\cdot \nabla \tilde u_h^{n+1},{\psi_i}) +   ({\psi_i}\cdot \nabla \tilde u_h^{n+1},\tilde u_h^{n+1})  + ( (\nabla \cdot \tilde u_h^{n+1})\tilde u_h^{n+1},{\psi_i}) \\
& =  - (\tilde u_h^{n+1}\cdot \nabla {\psi_i},\tilde u_h^{n+1})  -  ( (\nabla \cdot \tilde u_h^{n+1})\tilde u_h^{n+1},{\psi_i})  \\
&  \ \ \ \ \  -\frac12   ( (\nabla \cdot {\psi_i}) \tilde u_h^{n+1},\tilde u_h^{n+1})  + ( (\nabla \cdot \tilde u_h^{n+1})\tilde u_h^{n+1},{\psi_i}),
\end{align*}
with the last step thanks to integrating by parts.  Notice the 2nd and 4th terms sum to zero, and the first and third are both 0 since $\psi_i$ is constant and therefore its derivatives are zero.  Thus $ c(\tilde u_h^{n+1},\tilde u_h^{n+1},{\psi_i}) =0$.  This leaves
\[
\tilde M_i^{n+1} - \tilde M_i^n = \Delta t f_i^{n+1},
\]
where $\tilde M_i^n := (\tilde {u}_h^n, \psi_i ).$ From \eqref{M1}, we also have that
\[
M_i^{n+1} - M_i^n = \Delta t f_i^{n+1},
\]
where $ M_i^n := ( {u}_h^n, \psi_i ).$ This establishes that for EMAC-BE-PROJ, the momentum balance is analogous to that of the continuous NSE and in particular that the nonlinear term does not contribute to the balance.

For SKEW-BE-PROJ, we are not able to obtain such a balance.  All terms except the nonlinear term are handled the same as for EMAC-BE-PROJ, and thus we consider
\begin{align*}
b^{\ast}(\tilde u_h^{n+1},\tilde u_h^{n+1},{\psi_i}_h) & = b^{\ast}(\tilde u_h^{n+1},\tilde u_h^{n+1}, \psi_i ) \\
 & = (\tilde u_h^{n+1}\cdot \nabla \tilde u_h^{n+1}, \psi_i )  + \frac12 ( (\nabla \cdot \tilde u_h^{n+1}) \tilde u_h^{n+1}, \psi_i ) \\
 & = - (\tilde u_h^{n+1}\cdot \nabla \psi_i , \tilde u_h^{n+1} )  - \frac12 ( (\nabla \cdot \tilde u_h^{n+1}) \tilde u_h^{n+1}, \psi_i )\\
 & =  - \frac12 ( (\nabla \cdot \tilde u_h^{n+1}) \tilde u_h^{n+1}, \psi_i ),
 \end{align*}
with the last steps thanks to integrating by parts and using that $\psi_i$ is divergence-free.  Since $\nabla \cdot \tilde u_h^{n+1}\ne 0$, this term is not expected to vanish.  Thus the momentum balance for SKEW-BE-PROJ is
\[
M_i^{n+1} - M_i^n = \tilde M_i^{n+1} - \tilde M_i^n = \Delta t f_i^{n+1} + \frac{\Delta t}{2}  ( (\nabla \cdot \tilde u_h^{n+1}) \tilde u_h^{n+1}, \psi_i ).
\]


Despite the analysis above, which would apply to a general setting although with additional terms, in the case of homogeneous boundary conditions we can show that both EMAC-BE-PROJ and SKEW-BE-PROJ conserve momentum since for any $n$ and $q_h\in Q_h$, 
\[
0 = (\nabla \cdot u_h^{n},q_h) = - (u_h^{n},\nabla q_h),
\]
from the conservation of mass constraint.  Now for $q_h = x_i - \int_{\Omega} x_i$ we obtain $0=(u_h^n,e_i)$.  Thus $M^n =0$ for all $n$, which implies momentum conservation with $M$ and using \eqref{M1} produces momentum conservation with $\tilde M$.

\subsubsection{Angular momentum conservation}

{\color{black} To determine the angular momentum balances, we proceed similar to the case above for momentum and make the same assumptions about the solution vanishing on a strip along the boundary. 
Define $\phi_i=x \times \psi_i$, and note that $\Delta \phi_i=0$ and $\nabla \cdot \phi_i=0.$
Here we use the test function $\bfchi_h={\phi_i}_h \in X_h$ where ${\phi_i}_h = \phi_i$ except it is 0 in the final strip of the mesh along the boundary.}

Similar arguments as in the case of momentum imply that from Step 2 with $w_h={\phi_i}_h$, we get
 \begin{equation}
 (u_h^{n+1},{\phi_i}_h) = (\tilde u_h^{n+1},{\phi_i}_h). \label{AM1}
\end{equation}

Consider now choosing $\bfchi_h={\phi_i}_h$ in Step 1 of  EMAC-BE-PROJ.  Using \eqref{AM1}, we obtain
\begin{align}
\frac{1}{\Delta t}(\tilde u_h^{n+1}-  {u}_h^n,
{\phi_i}_h) + c(\tilde u_h^{n+1},\tilde u_h^{n+1},{\phi_i}_h) + \nu(\nabla \tilde u_h^{n+1},
\nabla {\phi_i}_h)
 =  (f (t^{n+1}),{\phi_i}_h ).
\end{align}
Since the solution and data vanish in $\Omega_s$ and $\nabla \cdot \phi_i$, we can write
\begin{align}
\frac{1}{\Delta t}(\tilde u_h^{n+1}- \tilde {u}_h^n, \phi_i ) + c(\tilde u_h^{n+1},\tilde u_h^{n+1},{\phi_i})  + \nu(\nabla \tilde u_h^{n+1},\nabla {\phi_i})
 =  (f (t^{n+1}),{\phi_i} ).
\end{align}
The viscous term can be seen to vanish from a calculation, e.g. for $i=2$,
\[
(\nabla \tilde u_h^{n+1},\nabla {\phi_2}) = \left( \partial_z (\tilde u_h^{n+1})\textcolor{black}{_1} - \partial_x (\tilde u_h^{n+1})\textcolor{black}{_3,1} \right) = 0,
\]
and similar calculations can be made for $i=1,3$.

For the nonlinear term of EMAC-BE-PROJ, expanding $c$ just as in the momentum conservation section above and using that $\nabla \cdot \phi_i=0$, we get
\begin{align*}
 c(\tilde u_h^{n+1},\tilde u_h^{n+1},{\phi_i}) & =  2(D(\tilde u_h^{n+1}) \tilde u_h^{n+1},{\phi_i}) +  ( (\nabla \cdot \tilde u_h^{n+1})\tilde u_h^{n+1},{\phi_i})  \\
& =  (\tilde u_h^{n+1}\cdot \nabla \tilde u_h^{n+1},{\phi_i}) +   ({\phi_i}\cdot \nabla \tilde u_h^{n+1},\tilde u_h^{n+1})  + ( (\nabla \cdot \tilde u_h^{n+1})\tilde u_h^{n+1},{\phi_i}) \\
& =  - (\tilde u_h^{n+1}\cdot \nabla {\phi_i},\tilde u_h^{n+1})  -  ( (\nabla \cdot \tilde u_h^{n+1})\tilde u_h^{n+1},{\phi_i})  \\
& \ \ \ \ \  -\frac12   ( (\nabla \cdot {\phi_i}) \tilde u_h^{n+1},\tilde u_h^{n+1})  + ( (\nabla \cdot \tilde u_h^{n+1})\tilde u_h^{n+1},{\phi_i}) \\
& =  - (\tilde u_h^{n+1}\cdot \nabla {\phi_i},\tilde u_h^{n+1}).
\end{align*}
For each of $i=1,2,3$, this last term can be seen to vanish by a calculation.  For example taking $i=1$,
\begin{align*}
 (\tilde u_h^{n+1}\cdot \nabla {\phi_1},\tilde u_h^{n+1}) & = \left( \tilde u_h^{n+1} \tilde u_h^{n+1}, \left( \begin{array}{ccc} 0& 0 & 1 \\ 0 & 0 & 0 \\ -1 & 0 & 0 \end{array}  \right) \right) \\
 & = \int_{\Omega} (\tilde u_h^{n+1})_1 (\tilde u_h^{n+1})_3 - (\tilde u_h^{n+1})_3 (\tilde u_h^{n+1})_1 \ dx \\
 & = 0,
\end{align*}
and the cases of $i=2,3$ follow similarly.

Hence for EMAC-BE-PROJ, we obtain the momentum balance
\[
(\tilde M_{X})_i^{n+1} - (\tilde M_{X})_i^n = (M_{X})_i^{n+1} - (M_{X})_i^n = \Delta t  (f (t^{n+1}),{\phi_i} ),
\]
where $(\tilde M_{X})_i^n := (\tilde {u}_h^n, \phi_i )$ and  $(M_{X})_i^n := ({u}_h^n, \phi_i )$, which is the backward Euler analogue of the continuous NSE momentum balance.

For the angular momentum balance in SKEW-BE-PROJ, the same procedure as for the EMAC case can be done except for the nonlinear term, for which we have that
\begin{align*}
b^{\ast}(\tilde u_h^{n+1},\tilde u_h^{n+1},{\phi_i}_h) & = b^{\ast}(\tilde u_h^{n+1},\tilde u_h^{n+1}, \phi_i ) \\
 & = (\tilde u_h^{n+1}\cdot \nabla \tilde u_h^{n+1}, \phi_i )  + \frac12 ( (\nabla \cdot \tilde u_h^{n+1}) \tilde u_h^{n+1}, \phi_i ) \\
 & = - (\tilde u_h^{n+1}\cdot \nabla \phi_i , \tilde u_h^{n+1} )  - \frac12 ( (\nabla \cdot \tilde u_h^{n+1}) \tilde u_h^{n+1}, \phi_i )\\
 & =  - \frac12 ( (\nabla \cdot \tilde u_h^{n+1}) \tilde u_h^{n+1}, \phi_i ) \textcolor{black}{,}
 \end{align*}
 where we use the same calculation as in the EMAC case to get $ (\tilde u_h^{n+1}\cdot \nabla \phi_i , \tilde u_h^{n+1} )=0$.  Again we observe the problem that
 $- \frac12 ( (\nabla \cdot \tilde u_h^{n+1}) \tilde u_h^{n+1}, \phi_i ) \ne 0$ since $ (\nabla \cdot \tilde u_h^{n+1})\ne 0$, and so the angular momentum balance has a contribution from the nonlinear term for SKEW-BE-PROJ:
\[
(M_{X})_i^{n+1} - (M_{X})_i^n = (\tilde M_{X})_i^{n+1} - (\tilde M_{X})_i^n = \Delta t f_i^{n+1} + \frac{\Delta t}{2}  ( (\nabla \cdot \tilde u_h^{n+1}) \tilde u_h^{n+1}, \phi_i ).
\]

\section{Improved convergence estimate for EMAC-BE-PROJ}
\label{sec:Proj_analysis}

We now show that EMAC-BE-PROJ gives longer time accuracy than for SKEW-BE-PROJ.  In particular, we will show that EMAC-BE-PROJ gives an improved constant in the Gronwall exponent in that it has a reduced dependence on the viscosity. There are many different projection methods one can study such as pressure correction, velocity correction and others \cite{GMS06}, and also in multiple steps for various norms as in \cite{shen1992error}, or comparing a fully discrete method to a semi-discrete method as in \cite{BC07}.  For simplicity, we choose to study the most basic method (defined in section 3) and compare it to a smooth true solution.  This proof shows $O(\Delta t^{1/2})$ temporal error in the $L^{\infty}(0,T;L^2)\cap L^2(0,T;H^1)$ norms, with the key distinction of the proof being the difference in the Gronwall constant for EMAC having no explicit dependence on the viscosity, while for SKEW it depends on $\textcolor{black}{\nu}^{-1}$ explicitly.  While other techniques could improve the asymptotic temporal error in other norms, these other variations of convergence proofs all use the Gronwall inequality and thus the same result will be found: with EMAC the Gronwall exponent will be significantly reduced compared to SKEW, which suggests longer time accuracy for EMAC vs. SKEW.

Before considering an error analysis, we first discuss its well-posedness.

\begin{lemma}
The EMAC-BE-PROJ method is unconditionally stable: for any $\Delta t>0$, solutions satisfy
\begin{equation}
   ||\tilde u_h^{M}||^2 + \sum_{n=0}^{M-1}||\tilde u_h^{n+1} - u_h^{n}||^2 + \nu \Delta t \sum_{n=0}^{M-1}||\nabla \tilde u_h^{n+1}||^2
  \leq ||u_0||^2 + \nu^{-1}\Delta t \sum_{n=0}^{M-1}||f(t^{n+1})||_{\blue{X'}}^2. \label{stability_bound}
\end{equation}
Moreover, for any $\Delta t>0$ solutions exist, and provided $\Delta t<O(h^{3})$ solutions are guaranteed to be unique. 
\end{lemma}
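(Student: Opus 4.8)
The plan is to prove the three assertions—stability, existence, and conditional uniqueness—in sequence, using the stability estimate as the a priori bound that drives the other two, and proceeding one time step at a time (inductively in $n$, with $u_h^n$ given from the previous Step~2). The stability bound \eqref{stability_bound} is essentially the energy inequality \eqref{E1} already derived in Section~\ref{sec:Proj_conservation}: after reindexing its sums via $n \mapsto n+1$ and using that the $L^2$ projection defining $\tilde u_h^0 = u_h^0$ is nonexpansive, so that $\| \tilde u_h^0 \| \le \| u^0 \| = \| u_0 \|$, one recovers \eqref{stability_bound} directly. The only ingredients are $c(\tilde u_h^{n+1},\tilde u_h^{n+1},\tilde u_h^{n+1})=0$, the polarization identity on the discrete time-derivative term, a Young estimate on the forcing, and the projection bound $\| u_h^n \| \le \| \tilde u_h^n \|$ furnished by Step~2, all of which were already assembled for \eqref{E1}.

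For existence I would handle the two steps separately. Step~2, \eqref{proj2}--\eqref{proj3}, is exactly the $L^2$-projection of $\tilde u_h^{n+1}$ onto the discretely divergence-free subspace of $Y_h$, with $P_h^{n+1}$ as its Lagrange multiplier; since $X_h \subset Y_h$ the inf-sup condition \eqref{LBB2} holds on $Y_h$, so for a given $\tilde u_h^{n+1}$ this linear saddle-point problem has a unique solution $(u_h^{n+1},P_h^{n+1})$ for every $\Delta t>0$. For the nonlinear Step~1, \eqref{projemac1}, I would invoke the standard consequence of Brouwer's fixed-point theorem: define $F:X_h \to X_h$ by $(F(v),\chi_h) := \frac{1}{\Delta t}(v-u_h^n,\chi_h) + c(v,v,\chi_h) + \nu(\nabla v,\nabla\chi_h) - (f(t^{n+1}),\chi_h)$, which is continuous because $c$ is polynomial in the coefficients of $v$. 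Testing with $v$ and using $c(v,v,v)=0$ together with Cauchy--Schwarz and Young gives $(F(v),v) \ge \frac{1}{2\Delta t}\|v\|^2 + \frac{\nu}{2}\|\nabla v\|^2 - \frac{1}{2\Delta t}\|u_h^n\|^2 - \frac{1}{2\nu}\|f(t^{n+1})\|_{X'}^2$, which is strictly positive on a sphere $\|v\|=R$ for $R$ large. Hence $F$ has a zero, which is the desired $\tilde u_h^{n+1}$, with no constraint on $\Delta t$.

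For uniqueness I would suppose $\tilde u_h^{n+1}$ and $\tilde v_h^{n+1}$ both solve Step~1 with the same $u_h^n$, set $e := \tilde u_h^{n+1} - \tilde v_h^{n+1}$, subtract the two equations, and test with $\chi_h=e$. Bilinearity of $c$ in its first two arguments and $c(e,e,e)=0$ reduce the nonlinear contribution to $c(\tilde v_h^{n+1},e,e)+c(e,\tilde v_h^{n+1},e)$, leaving $\frac{1}{\Delta t}\|e\|^2 + \nu\|\nabla e\|^2 = -c(\tilde v_h^{n+1},e,e) - c(e,\tilde v_h^{n+1},e)$. Bounding each term with \eqref{cbound} as $C\|\nabla \tilde v_h^{n+1}\|\,\|e\|^{1/2}\|\nabla e\|^{3/2}$, then applying the inverse inequality $\|\nabla e\| \le Ch^{-1}\|e\|$ to the factor $\|\nabla e\|^{3/2}$ and the single-step stability estimate $\|\nabla \tilde v_h^{n+1}\| \le (K/(\nu\Delta t))^{1/2}$ (with $K := \|u_h^n\|^2 + \nu^{-1}\Delta t\|f(t^{n+1})\|_{X'}^2$) to the remaining gradient, yields $\frac{1}{\Delta t}\|e\|^2 + \nu\|\nabla e\|^2 \le Ch^{-3/2}(K/(\nu\Delta t))^{1/2}\|e\|^2$. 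Thus whenever $\Delta t < c\,\nu h^3/K = O(h^3)$ the right-hand coefficient is strictly less than $1/\Delta t$, forcing $e=0$; uniqueness of $(u_h^{n+1},P_h^{n+1})$ then follows from the linear solvability of Step~2.

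I would expect the \emph{main obstacle} to be the uniqueness estimate. The EMAC form $c$ satisfies no skew-symmetry beyond $c(w,w,w)=0$, so the terms $c(\tilde v_h^{n+1},e,e)$ and $c(e,\tilde v_h^{n+1},e)$ genuinely survive and must be controlled directly. The delicate point is to distribute the inverse inequality and the stability bound across the factors $\|\nabla e\|^{3/2}$ and $\|\nabla \tilde v_h^{n+1}\|$ so that exactly the power $h^3$ emerges: using the inverse inequality on all gradient factors, or the stability bound on all of them, produces a different and non-matching time-step threshold, so the precise allocation (inverse inequality on the $e$-gradients, stability bound on $\tilde v_h^{n+1}$) is what pins down the stated $O(h^3)$ restriction.
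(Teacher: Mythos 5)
Your proposal is correct and follows essentially the same route as the paper: the stability bound is read off from the energy inequality already derived in Section 3, existence comes from a topological fixed-point argument powered by that a priori bound (the paper invokes Leray--Schauder where you use the Brouwer-type corollary --- an immaterial difference in this finite-dimensional setting, and in fact the Brouwer version is what the cited reference uses), and Step 2 is dispatched as a well-posed $L^2$ projection. Your uniqueness argument is the paper's argument almost verbatim --- subtract the two solutions, test with $e$, bound the surviving terms $c(\cdot,e,e)+c(e,\cdot,e)$ via \eqref{cbound}, control the gradient of the solution by $C\Delta t^{-1/2}$ from the stability estimate, and apply the inverse inequality to the $e$-gradients --- yielding exactly the paper's $\Delta t < O(h^3)$ threshold.
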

\begin{remark}
The same lemma holds for SKEW-BE-PROJ with an almost identical proof.
\end{remark}

\begin{proof}
The stability bound follows immediately from the energy conservation analysis in the previous section.  With this unconditional stability, Leray-Schauder can be used to infer solutions to EMAC Step 1 at any time step, in an analogous way to what is done in \cite{Laytonbook} for steady NSE.  For uniqueness of EMAC Step 1 solutions, suppose there are 2 solutions at time step $n$, $\tilde u_h$ and $\tilde w_h$.  Plugging them into EMAC Step 1, setting $e=\tilde u_h - \tilde w_h$ and subtracting their equations gives
\[
\frac{1}{\Delta t}(e,v_h) + \nu(\nabla e,\nabla v_h) = -c(\tilde u_h,e,v_h) - c(e,\tilde w_h,v_h),
\]
for any $v_h \in X_h$.  Taking $v_h=e$ produces
\begin{align*}
\frac{1}{\Delta t}\| e \|^2 + \nu\| \nabla e \|^2 & = -c(\tilde u_h,e,e) - c(e,\tilde w_h,e) \\
& \le M (\| \nabla \tilde u_h \| + \| \nabla \tilde w_h \| ) \| \nabla e \|^{3/2} \| e \|^{1/2} \\
& \le C\Delta t^{-1/2}\| \nabla e \|^{3/2} \| e \|^{1/2} ,
\end{align*}
thanks to \eqref{cbound}, the bound on solutions \eqref{stability_bound}, with $C$ being independent of $h$ and $\Delta t$.  Now using the inverse inequality and reducing gives
\[
\Delta t^{-1/2}\| e \|^2 \le C h^{-3/2} \| e \|^2,
\]
which implies that $\Delta t < O(h^{3})$ will yield $e=0$ and thus uniqueness of solutions for step 1.  Step 2 is the $L^2$ projection of Step 1 solutions into the divergence free subspace, and so preserves existence and uniqueness of solutions.
\end{proof}

Next we prove an error estimate for EMAC-BE-PROJ.  We use the notation $\| \varphi \|_{p,r} := \| \varphi \|_{L^p(0,T;H^r\textcolor{black}{)}}$.
\begin{theorem}\label{ErrorEstimate}
Let $(\uh,p_h)$ be the solution of Algorithm \ref{beleproj} (EMAC-BE-PROJ), and 
$(u,p)$ be a NSE solution that satisfies \textcolor{black}{$u \in L^{\infty}(0,T;H^{k+1} \cap V)$} with $k\ge 2$, $u_{t} \in L^{\infty}(0,T;H^{k+1})$, $u_{tt} \in L^2(0,T;H^{k+1})$ and $p \in L^2(0,T;H^k)$. Also, let $\be^{n+1}=u^{n+1}-u_h^{n+1}$ and $\tilde \be^{n+1}= u^{n+1}- \tilde u_h^{n+1}$. Then, for sufficiently small $\Delta t$ we have
\begin{eqnarray*}
&&  |||\tilde \be^M|||^2_{\infty,0}  
+  \nu|||\nabla \tilde \be^{n+1}|||^2_{2,0} \leq 
  C K  \bigg( 
\nu^{-1} h^{2k} (\textcolor{black}{ h^2 \norm{u_t}_{2,k+1}^2} + \nu^2T \textcolor{black}{ \trinorm{u}}_{\infty,k+1}^2 \nonumber \\
& & + T \textcolor{black}{\trinorm{ u }}_{\infty,k+1}^4  +  \textcolor{black}{ \trinorm{ p }_{2,k}^2)}
+ \nu^{-1}\Delta t^2 T \textcolor{black}{ \trinorm{u_{tt}}_{\infty,0}} 
+  \delt  \textcolor{black}{\trinorm{ p  }^2_{2,1}}
\bigg),
 \end{eqnarray*}
 where $K=\exp{\left( \Delta t \sum_{n = 0}^{l} \frac{\gamma_{n}}{(1 - \Delta t \, \gamma_{n})} \right)}$ and $\gamma_n = C \| \nabla u^n \|_{L^{\infty}}$.
 \end{theorem}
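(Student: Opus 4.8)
The plan is to run a standard energy-type finite element error analysis adapted to the projection splitting, engineered so that the only quantity entering the Gronwall exponent is $\|\nabla u\|_{L^{\infty}}$ rather than a negative power of $\nu$. First I would introduce the discrete Stokes projection $I_h^{St}(u^{n+1}) \in V_h$ and split the velocity error as $\tilde \be^{n+1} = \eta^{n+1} - \phi_h^{n+1}$, where $\eta^{n+1} = u^{n+1} - I_h^{St}(u^{n+1})$ is controlled by the optimal approximation estimates and $\phi_h^{n+1} = \tilde u_h^{n+1} - I_h^{St}(u^{n+1}) \in X_h$ is the discrete component to be bounded by Gronwall. The final statement then follows from the triangle inequality $\|\tilde \be^{n+1}\| \le \|\eta^{n+1}\| + \|\phi_h^{n+1}\|$ together with the $\|\eta\|$ estimates.

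Next I would derive the error equation. Testing the continuous NSE at $t^{n+1}$ against $\chi_h \in X_h$ and writing the nonlinearity in EMAC form (using $\nabla \cdot u = 0$, so $u \cdot \nabla u + \nabla p = 2D(u)u + \nabla P$ with $P = p - \frac12 |u|^2$), then subtracting Step 1 \eqref{projemac1}, produces an identity for $\frac{1}{\Delta t}(\tilde \be^{n+1} - \be^n, \chi_h)$ plus nonlinear, pressure, viscous, and consistency terms, where the consistency term is $(\tau^{n+1}, \chi_h)$ with $\tau^{n+1} = \frac{u^{n+1}-u^n}{\Delta t} - u_t^{n+1} = O(\Delta t)$ measured by $u_{tt}$. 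The projection Step 2 \eqref{proj2} is then used to eliminate $\be^n$: integrating by parts (the boundary term vanishes since $w_h \cdot \hat n = 0$ on $\partial\Omega$ for $w_h \in Y_h$) gives $\be^n = \tilde \be^n + \Delta t\,\nabla P_h^n$ weakly, so the time-difference term becomes $\frac{1}{\Delta t}(\tilde \be^{n+1} - \tilde \be^n, \chi_h) - (\nabla P_h^n, \chi_h)$ and the pressure contribution collapses to the single splitting term $(\nabla(P^{n+1} - P_h^n), \chi_h)$.

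I would then set $\chi_h = \phi_h^{n+1}$ and apply the polarization identity to get $\frac{1}{2\Delta t}(\|\phi_h^{n+1}\|^2 - \|\phi_h^n\|^2 + \|\phi_h^{n+1} - \phi_h^n\|^2)$, the leftover cross term $\frac{1}{\Delta t}(\eta^{n+1} - \eta^n, \phi_h^{n+1})$ generating the $h^{2k}\, h^2 \trinorm{u_t}_{2,k+1}^2$ contribution. The nonlinear difference $c(u^{n+1}, u^{n+1}, \phi_h^{n+1}) - c(\tilde u_h^{n+1}, \tilde u_h^{n+1}, \phi_h^{n+1})$ I would expand bilinearly and substitute $\tilde \be^{n+1} = \eta^{n+1} - \phi_h^{n+1}$ and $\tilde u_h^{n+1} = u^{n+1} - \eta^{n+1} + \phi_h^{n+1}$, using $c(\phi_h^{n+1}, \phi_h^{n+1}, \phi_h^{n+1}) = 0$. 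This isolates the two terms $c(u^{n+1}, \phi_h^{n+1}, \phi_h^{n+1})$ and $c(\phi_h^{n+1}, u^{n+1}, \phi_h^{n+1})$, and the heart of the argument is that integration by parts (using $\phi_h^{n+1}|_{\partial\Omega} = 0$ and $\nabla \cdot u^{n+1} = 0$) moves every derivative onto $u^{n+1}$, so both are bounded by $C \|\nabla u^{n+1}\|_{L^{\infty}} \|\phi_h^{n+1}\|^2$. This is exactly the viscosity-free Gronwall contribution $\gamma_n \|\phi_h^{n+1}\|^2$, and it is where EMAC beats SKEW, whose corresponding term would require absorbing $\|\nabla \phi_h^{n+1}\|$ into the viscous dissipation at the cost of a negative power of $\nu$. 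Every remaining nonlinear piece carries a factor of $\eta^{n+1}$ and is handled by \eqref{cbound}, the approximation properties, and Young's inequality, hiding a fraction of $\nu \|\nabla \phi_h^{n+1}\|^2$ and contributing to the $\nu^{-1} h^{2k}$ data terms.

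Finally I would bound the splitting term $(\nabla(P^{n+1} - P_h^n), \phi_h^{n+1})$ and the consistency term $(\tau^{n+1}, \phi_h^{n+1})$ by subtracting a best approximation $q_h \in Q_h$ to $P^{n+1}$, invoking \eqref{LBB2} to control $\nabla P_h^n$ through the discrete equations, and applying Young's inequality with further absorption into the viscous term; this is where the $\delt\,\trinorm{p}_{2,1}^2$ and $\nu^{-1}\Delta t^2 T\,\trinorm{u_{tt}}_{\infty,0}$ terms arise and where only $O(\Delta t^{1/2})$ accuracy survives for this basic pressure-correction scheme. Summing over time steps, using the stability bound \eqref{stability_bound} to control $\|\nabla \tilde u_h\|$ inside the $\eta$-terms and the polarization increment $\|\phi_h^{n+1} - \phi_h^n\|^2$ to absorb the time cross terms, and applying the discrete Gronwall lemma (which requires $\Delta t\,\gamma_n < 1$, i.e.\ sufficiently small $\Delta t$, and yields exactly $K = \exp\big(\Delta t \sum_n \gamma_n/(1 - \Delta t\,\gamma_n)\big)$) bounds $\trinorm{\phi_h}_{\infty,0}$ and $\nu\trinorm{\nabla \phi_h}^2_{2,0}$, giving the stated estimate after converting back to $\tilde \be$. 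The main obstacle I anticipate is the pressure-splitting term: reconciling the non-solenoidal intermediate velocity $\tilde u_h^{n+1}$ with the projected $u_h^{n+1}$ and controlling the discrete pressure increment $\nabla P_h^n$ without reintroducing a negative power of $\nu$ into the Gronwall exponent, so that the viscosity-independence gained in the nonlinear term is not spoiled elsewhere.
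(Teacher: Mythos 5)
Your treatment of the nonlinear term is essentially the paper's argument (which follows \cite{OR20}): split the error using a divergence-free (Stokes-type) interpolant, isolate the two terms $c(u^{n+1},\phi_h^{n+1},\phi_h^{n+1})$ and $c(\phi_h^{n+1},u^{n+1},\phi_h^{n+1})$, and use the EMAC structure so that both are bounded by $C\|\nabla u^{n+1}\|_{L^\infty}\|\phi_h^{n+1}\|^2$, which is exactly the source of the viscosity-free Gronwall factor. The paper combines these two terms via the identity $c(v,w,w)+c(w,v,w)=\left(\left[D(v)+\tfrac12\nabla\cdot v\right]w,w\right)$ and invokes the $L^\infty$ stability of the Stokes projection \eqref{eq:Stokes_approx}, rather than integrating by parts against $u^{n+1}$ directly, but that is the same mechanism. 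One caveat: your full bilinear expansion also leaves terms carrying two factors of $\phi_h$ and one $\eta$, e.g. $c(\phi_h^{n+1},\eta^{n+1},\phi_h^{n+1})$ and $c(\eta^{n+1},\phi_h^{n+1},\phi_h^{n+1})$; bounding these via \eqref{cbound} and Young's inequality costs $C\nu^{-3}\|\nabla\eta^{n+1}\|^4\|\phi_h^{n+1}\|^2$, which quietly reintroduces viscosity dependence into the Gronwall exponent. The paper's decomposition (the one from \cite{OR20}, with $\stokes{u^{n+1}}$ as the first argument) is exact and produces no such quadratic-in-$\phi_h$ leftovers, so you would need to regroup as the paper does to get the stated $\gamma_n = C\|\nabla u^n\|_{L^\infty}$.

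The genuine gap is in your handling of the projection step. You eliminate $u_h^n$ via Step 2, writing $e^n = \tilde e^n + \Delta t\,\nabla P_h^n$ weakly, and then propose to control the resulting term $(\nabla(P^{n+1}-P_h^n),\phi_h^{n+1})$ by using \eqref{LBB2} to bound $\nabla P_h^n$ "through the discrete equations." This step does not close. The only a priori control on the non-incremental (Chorin) discrete pressure comes from \eqref{proj2} and \eqref{LBB2}: $\beta\|\nabla P_h^{n}\| \le \Delta t^{-1}\|u_h^{n}-\tilde u_h^{n}\| \le \Delta t^{-1}\|\tilde u_h^{n}-u_h^{n-1}\|$, and the stability bound \eqref{stability_bound} only gives $\sum_n \|\tilde u_h^{n}-u_h^{n-1}\|^2 \le C$, hence $\Delta t \sum_n \|\nabla P_h^n\|^2 \le C\Delta t^{-1}$. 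Feeding this into $(\nabla P_h^n,\phi_h^{n+1})$ by Cauchy--Schwarz and Young either produces a per-step term $\tfrac{1}{2\Delta t}\|\phi_h^{n+1}\|^2$, which destroys the Gronwall argument (the exponent becomes $O(T/\Delta t)$), or requires $\|\phi_h^{n+1}\| = O(\Delta t)$, which is circular and stronger than the theorem itself; absorbing $\|\nabla\cdot\phi_h^{n+1}\|$ into the viscous term instead requires an $L^2$ bound on $P_h^n$ that is equally unavailable. The paper's proof avoids this entirely: the discrete pressure never enters the error equation. It keeps the mismatched time difference $\frac{1}{\Delta t}(\tilde\phi_h^{n+1}-\phi_h^n,\cdot)$ from Step 1, then tests Step 2 with the discretely divergence-free $\phi_h^{n+1}$, so that $(P_h^{n+1},\nabla\cdot\phi_h^{n+1})=0$ and one obtains the Pythagorean identity $\|\tilde\phi_h^{n+1}\|^2 = \|\phi_h^{n+1}\|^2 + \|\tilde\phi_h^{n+1}-\phi_h^{n+1}\|^2$. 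This identity does double duty: it restores the telescoping sum, and it places $\tfrac{1}{2\Delta t}\|\tilde\phi_h^{n+1}-\phi_h^{n+1}\|^2$ on the left-hand side, which is precisely what absorbs the true-pressure term after the nodal-interpolant splitting of $(p^{n+1},\nabla\cdot\tilde\phi_h^{n+1})$, yielding the $\Delta t\,\|\nabla p\|^2$ contribution with no $\nu^{-1}$ factor and no discrete pressure anywhere. You flagged this as "the main obstacle I anticipate," correctly; but without this device (or an equivalent one), your proof does not go through.
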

 
 \begin{remark} \label{remGron}
The key improvement from EMAC is evident in the Gronwall constant K from the theorem.  In particular we note there is no explicit dependence on the viscosity.  For SKEW-BE-PROJ, a nearly identical estimate would be obtained (see \eqref{banal4} below) under the same smoothness assumptions on the true solution, but the Gronwall constant would have $\gamma_n=C(\| \nabla u^n \|_{L^{\infty}}^2 + \nu^{-1} \| u^n \|_{L^{\infty}}^2 )$. This suggests EMAC-BE-PROJ has better longer time accuracy than SKEW-BE-PROJ.
 \end{remark}
 
  \begin{remark}
 Following \cite{shen1992error}, one can obtain an identical error bound but with left hand side in terms of $e^n$ instead of $\tilde e^n$.
  \end{remark}

\begin{proof}
We split the error in the usual way as $\be^{n+1}= \bfeta^{n+1}-\bfphi_h^{n+1}$ and $\tilde \be^{n+1}=\bfeta^{n+1}- \tilde \bfphi_h^{n+1}$ where $\bfeta=u-I(u)$, $\tilde \bfphi_h =I(u)-\tilde u_h\, \in \bX_h$ and $\bfphi_h=I(u)- u_h\, \in \bY_h$ with $I(u)$ being a pointwise div-free interpolant of $u$ in $X_h$.
Subtract \eqref{projemac1} (Step 1 EMAC) from the NSE at time $t^{n+1}$ and tested with $\chi_h\in X_h$ to obtain
\begin{eqnarray*}
  \frac{1}{\Delta t}(\tilde \be^{n+1}- \be_h^n,\bfchi_h)+c( u^{n+1}, u^{n+1},\bfchi_h) - c( \tilde u_h^{n+1},\tilde u_h^{n+1},\bfchi_h) + \nu(\nabla \tilde \be^{n+1},\nabla \bfchi_h)\\
  -(p^{n+1},\nabla \cdot \bfchi_h) =  \left(\frac{1}{\Delta t}(u^{n+1}- u^n)-u_t^{n+1},\bfchi_h\right), \quad \forall \bfchi_h \in \bX_h.
\end{eqnarray*}
Splitting the error, letting $\bfchi_h =\tilde \bfphi_h^{n+1}$, we obtain
\begin{eqnarray}\label{err_1} 
&  &\frac{1}{2\Delta t}\left(||\tilde \bfphi_h^{n+1}||^2-||\bfphi_h^{n}||^2+||\tilde \bfphi_h^{n+1} - \bfphi_h^{n}||^2\right) + \nu||\nabla \tilde \bfphi_h^{n+1}||^2
= \frac{1}{\Delta t}(\bfeta^{n+1}- \bfeta^n,\tilde \bfphi_h^{n+1})\nonumber \\
& & \textcolor{black}{+} \nu(\nabla \bfeta^{n+1},\nabla \tilde \bfphi_h^{n+1})+(u_t^{n+1}-\frac{1}{\Delta t}(u^{n+1}-u^n), \tilde \bfphi_h^{n+1})
- (p^{n+1},\nabla \cdot \tilde \bfphi_h^{n+1}) \\
&& 
\textcolor{black}{+} c( u^{n+1}, u^{n+1},\tilde \bfphi_h^{n+1}) \nonumber 
\textcolor{black}{-} c( \tilde u_h^{n+1},\tilde u_h^{n+1},\tilde \bfphi_h^{n+1}).
\end{eqnarray}

We now bound the above right hand side terms. Other than the nonlinear term, these bounds are fairly standard for NSE finite element error analysis, see e.g. \cite{Laytonbook},
\begin{eqnarray}
\frac{1}{\Delta t}(\bfeta^{n+1}- \bfeta^n,\tilde \bfphi_h^{n+1}) & \le & 
\textcolor{black}{C \frac{\nu\inv}{\delt} \norm{\bfeta^{n+1}- \bfeta^n} + C \frac{\nu}{12} \norm{\tilde \bfphi_h^{n+1}}} \nonumber\\
 & \leq & \textcolor{black}{ C\nu\inv  \omint \pare{ \frac{1}{\delt} \int_{t^n}^{t^{n+1}} \abs{\bfeta_t} \, dt }^2 d\Omega + C \frac{\nu}{12} \norm{\tilde \bfphi_h^{n+1}}} \nonumber\\
&\leq & \textcolor{black}{ C \frac{\nu\inv}{\delt} \int_{t^n}^{t^{n+1}} \norm{\bfeta_t}^2 \, dt  + C \frac{\nu}{12} \norm{\tilde \bfphi_h^{n+1}}} \label{t1_bound}\\
\nu(\nabla \bfeta^{n+1},\nabla \tilde \bfphi_h^{n+1}) & \leq & \frac{\nu}{12}||\nabla \tilde \bfphi_h^{n+1}||^2+C\nu||\nabla \bfeta^{n+1}||^2, \label{t2_bound} \\
 (u_t^{n+1}-\frac{1}{\Delta t}(u^{n+1}-u^n),\tilde \bfphi_h^{n+1}) & \leq & \frac{\nu}{12}||\nabla \tilde \bfphi_h^{n+1}||^2+C\nu^{-1}\Delta t^2||u_{tt} (t^*)||^2, \label{t3_bound} 
 \end{eqnarray}
 with $t^n \le t^* \le t^{n+1}$.  The pressure term takes a few additional steps, and we denote by $ I_h $ the nodal interpolant of $ p $ in $ Q_h $ and use Cauchy-Schwarz and Young's inequalities along with an interpolation bound to get
 
 \begin{align}
    | (p^{n+1} & , \Grad \cdot \Tilde{\phi}_h^{n+1} ) |   =   \abs{ ( p^{n+1} - I_h(p(t^{n+1})), \Grad \cdot \Tilde \bfphi_h^{n+1}  ) + ( I_h(p^{n+1}), \Grad \cdot \Tilde  \bfphi_h^{n+1} ) } \nonumber \\
 &  \le  \abs{ \textcolor{black}{(} p^{n+1} - I_h ( p^{n+1}) ,\Grad \cdot \Tilde  \bfphi_h^{n+1} \textcolor{black}{)}} + \abs{ (  I_h(p^{n+1}), \Grad \cdot ( \Tilde{\phi}_h^{n+1} - \bfphi_h^{n+1} )   ) }  \nonumber \\
 & \leq   C \norm{ p^{n+1} - I_h ( p^{n+1} ) }\norm{\Grad \Tilde  \bfphi_h^{n+1} } + \abs{ (  I_h(p^{n+1}), \Grad \cdot ( \Tilde{\phi}_h^{n+1} - \bfphi_h^{n+1} )   ) }  \nonumber \\
 & \leq   \frac{\nu}{12} \norm{ \Grad \Tilde{\phi}_h^{n+1} }^2 + C \nu^{-1} \norm{ p^{n+1} - I_h ( p^{n+1} ) }^2 \nonumber \\
 & \qquad + \frac{1}{4 \delt} \norm{ \Tilde{\phi}_h^{n+1} - \bfphi_h^{n+1} }^2 + C \delt \norm{ \Grad p^{n+1} }^2   \nonumber \\
& \leq  \frac{\nu}{12} \norm{ \Grad \Tilde{\phi}_h^{n+1}  }^2 + C\nu^{-1} h^{2k} \norm{p^{n+1}}_{H^{k}}^2 \nonumber\\
& \qquad + \frac{1}{4 \delt} \norm{ \Tilde{ \phi}_h^{n+1} - \bfphi_h^{n+1} }^2 + C \delt \norm{ \Grad p^{n+1}  }^2. \label{t7_bound} 
\end{align}

For the nonlinear terms, we first decompose following \cite{OR20} to obtain
\begin{align*}
c(u^{n+1},u^{n+1},\tilde \bfphi_h^{n+1}) &- c(\tilde u_h^{n+1},\tilde u_h^{n+1},\tilde \bfphi_h^{n+1})
   = ([2\bD(\bfeta^{n+1})+\textcolor{black}{\divergence}\bfeta^{n+1}]u^{n+1},\tilde \bfphi_h^{n+1}) \\
   &+ ([2\bD(\textcolor{black}{\stokes{u^{n+1}}})+\textcolor{black}{\divergence \stokes{u^{n+1}}}]\bfeta^{n+1},\tilde \bfphi_h^{n+1})\\
   &- ([\bD(\textcolor{black}{\stokes{u^{n+1}}})+\frac12 \textcolor{black}{\divergence \stokes{u^{n+1}}}] \tilde \bfphi_h^{n+1},\tilde \bfphi_h^{n+1}).
\end{align*}

Next we estimate the terms on the right-hand side of the above equations. We
repeat the estimates similar to above and use the  $H^1$-stability of the Stokes interpolant to get
\bea\label{ss6-7}
  & & \hspace{-1cm} |([2\bD(\bfeta^{n+1})+\textcolor{black}{\divergence}\bfeta^{n+1}]u^{n+1}, \tilde \bfphi_h^{n+1}) + ([2\bD(\textcolor{black}{\stokes{u^{n+1}}})+\textcolor{black}{\divergence\stokes{u^{n+1}}}]\bfeta^{n+1}, \tilde \bfphi_h^{n+1})| \nonumber\\
  & \le & \hspace{-.25cm}  C\nu^{-1} \| \nabla u^{n+1} \|^2 \|  \bfeta^{n+1} \| \| \nabla \bfeta^{n+1} \|
  +  C\nu^{-1} \| \nabla u^{n+1} \| \|  u^{n+1}\| \| \nabla \bfeta^{n+1} \|^2 +  \textcolor{black}{\frac{\nu}{6}} \| \nabla \tilde \bfphi_h^{n+1} \|^2, \qquad \;
\eea
and
\begin{align}
 |([\bD(\textcolor{black}{\stokes{u^{n+1}}})+\frac12\textcolor{black}{\divergence\stokes{u^{n+1}}}]\tilde \bfphi_h^{n+1},\tilde \bfphi_h^{n+1})| & \le \frac32\|\bD(\textcolor{black}{\stokes{u^{n+1}}}\|_{L^{\infty}}\|\tilde \bfphi_h^{n+1} \|_{L^2}\|\tilde \bfphi_h^{n+1} \|_{L^2}\nonumber \\ & \le
 C\|\nabla u^{n+1}\|_{L^{\infty}}\|\tilde \bfphi_h^{n+1}\|^2,\label{ss8emac}
 \end{align}
thanks to the stability of the Stokes projection in \eqref{eq:Stokes_approx} with $r=\infty$.

By adding and subtracting our interpolant $I(u)$ into Step 2 we obtain
\begin{equation*}
  \frac{1}{\Delta t}({\phi}_h^{n+1}-\tilde {\phi}_h^{n+1},\bw_h) = - (\textcolor{black}{P}_h^{n+1},\nabla \cdot \bw_h), \quad \forall \bw_h \in \bY_h.
\end{equation*}
Let $\bw_h=\bfphi_h^{n+1}$ and using that $ \bfphi_h^{n+1}$ is weakly divergence free we have 
\begin{equation*}
  \frac{1}{\Delta t}({\phi}_h^{n+1}-\tilde {\phi}_h^{n+1},\bfphi_h^{n+1}) = - (\textcolor{black}{P}_h^{n+1},\nabla \cdot \bfphi_h^{n+1})=0,
\end{equation*}
and thus
\begin{equation*}
  \frac{1}{2 \Delta t}\left(||\bfphi_h^{n+1}||^2-||\tilde \bfphi_h^{n+1}||^2 +||\bfphi_h^{n+1}- \tilde \bfphi_h^{n+1}||^2\right) =0.
\end{equation*}
The above implies
\begin{equation*} \label{error_2}
||\tilde \bfphi_h^{n+1}||^2 = ||\bfphi_h^{n+1}||^2 + || \bfphi_h^{n+1} - \tilde \bfphi_h^{n+1} ||^2,
\end{equation*}
which will be used in (\ref{err_1}) to obtain a telescoping sum. Also, substituting the obtained bounds (\ref{t1_bound})-(\ref{ss8emac}) into (\ref{err_1}), multiplying by $2 \Delta t$ and summing up from $n=0$ to $M-1$ (with the assumption that $||\bfphi_h^0||=0$) gives us the following bound
\begin{eqnarray}
& & ||\tilde \bfphi_h^{M}||^2 + \sum_{n=0}^{M-1} \left( \frac{1}{2}||\tilde \bfphi_h^{n+1} - \bfphi_h^{n}||^2 + ||\bfphi_h^{n+1}- \tilde \bfphi_h^{n+1}||^2 \right)
+\nu \Delta t \sum_{n=1}^{M} ||\nabla \tilde \bfphi_h^{n}||^2 \nonumber \\
&\leq& \textcolor{black}{C \nu\inv \int_{0}^{T} \norm{\bfeta_t}^2 \, dt}
+ C\nu \Delta t \sum_{n=1}^M ||\nabla \bfeta^{n}||^2 + C \textcolor{black}{\delt^2 \sum_{n=1}^M \norm{ \Grad p^{n}  }^2} \nonumber \\
&& + C\nu^{-1}\Delta t^2 T \textcolor{black}{\trinorm{u_{tt}}_{L^{\infty}(0,T;L^2)}^2}
+ C\nu^{-1} h^{2k} \textcolor{black}{ \textcolor{black}{\trinorm{ p}_{L^{2}(0,T;H^k)}^2} }  \nonumber \\
& &
+C\nu^{-1}\Delta t \sum_{n=1}^M  \| \nabla u^{n} \|^2 \| \nabla \bfeta^{n} \|^2  + 
C \Delta t \sum_{n=1}^M \|\nabla u^{n}\|_{L^{\infty}}\|\tilde \bfphi_h^{n}\|_{L^2}^2 \textcolor{black}{,}
\label{error_3}
\end{eqnarray}
thanks also to the Poincare inequality and reducing to get the second to last term.  We reduce the right hand side further, utilizing interpolation estimates and true solution regularity, and dropping positive left hand side terms to find that
\begin{eqnarray}
& & ||\tilde \bfphi_h^{M}||^2 
+\nu \Delta t \sum_{n=1}^{M} ||\nabla \tilde \bfphi_h^{n+1}||^2 \nonumber \\
&\leq& 
C\nu^{-1} h^{2k} ( h^2 \textcolor{black}{\norm{u_t}_{2,k+1}^2}   + \nu^2T \textcolor{black}{\trinorm{u}_{\infty,k+1}^2} + T \textcolor{black}{\trinorm{u}_{\infty,k+1}^4} + \textcolor{black}{\trinorm{ p }_{2,k}^2})
 \nonumber \\
&& + C\nu^{-1}\Delta t^2 T \textcolor{black}{\trinorm{u_{tt}}}_{\infty,0}^{\textcolor{black}{2} }
+ C \textcolor{black}{\Delta t \textcolor{black}{\trinorm{ p  }}^2_{2,1}}
+C \Delta t \sum_{n=1}^M \|\nabla u^{n}\|_{L^{\infty}}\|\tilde \bfphi_h^{n}\|_{L^2}^2.
\label{error_3}
\end{eqnarray}
Now by the Gronwall inequality with $\Delta t$ sufficiently small, i.e. $\gamma_n \Delta t := C ||\nabla u^n||_{L^{\infty}} \Delta t < 1$,  we obtain
\bea
& & ||\tilde \bfphi_h^{M}||^2 
+\nu \Delta t \sum_{n=0}^{M-1} ||\nabla \tilde \bfphi_h^{n+1}||^2 \nonumber \\
&\leq& C \exp{\left( \Delta t \sum_{n = 0}^{l} \frac{\gamma_{n}}{(1 - \Delta t \, \gamma_{n})} \right)}  \bigg( 
\nu^{-1} h^{2k} ( T \textcolor{black}{\norm{u_t}_{2,k+1}^2}  + \nu^2T \textcolor{black}{ \trinorm{ u }_{\infty,k+1}^2} \nonumber \\
& & + T \textcolor{black}{\trinorm{ u }_{\infty,k+1}^4}  + \textcolor{black}{ \trinorm{ p }_{2,k}^2})
+ \nu^{-1}\Delta t^2 T \textcolor{black}{ \trinorm{u_{tt}}^2_{\infty,0} }
+ \delt \textcolor{black}{ \trinorm{ p }^2_{2,1}} \bigg). \label{error_5}
\eea
From here, the triangle inequality finishes the proof.
\end{proof}

A similar proof for SKEW-BE-PROJ would follow the same way, except for the nonlinear terms.  In this case, we would expand the difference to get
\begin{align}
b^*(u^{n+1},u^{n+1},\tilde \bfphi_h^{n+1}) & -b^*(\tilde u_h^{n+1},\tilde u_h^{n+1},\tilde \bfphi_h^{n+1}) \nonumber \\
   &= b^*(\tilde u_h^{n+1},\tilde e^{n+1},\tilde \bfphi_h^{n+1})+ b^*(\tilde e^{n+1}, u^{n+1},\tilde \bfphi_h^{n+1})  \nonumber \\
   &= b^*(\tilde u_h^{n+1},\eta^{n+1},\tilde \bfphi_h^{n+1})+ b^*(\tilde e^{n+1}, u^{n+1},\tilde \bfphi_h^{n+1}) \textcolor{black}{.} \label{banal}
\end{align}
For the first term, H\"older, Sobolev inequalities and Young's inequality produce
\begin{align}
    b^*(\tilde u_h^{n+1},\eta^{n+1},\tilde \bfphi_h^{n+1}) & \le \textcolor{black}{\frac{\nu}{12}||\nabla \tilde \bfphi_h^{n+1}||^2 + C\nu^{-1}||\tilde \uh^{n+1}|| ||\nabla \tilde \uh^{n+1}|| ||\nabla \bfeta^{n+1}||^2.}  \label{banal2}
\end{align}
and for the second term
\begin{align}
    b^*(\tilde e^{n+1}, u^{n+1},\tilde \bfphi_h^{n+1})
    & = b^*( \eta^{n+1}, u^{n+1},\tilde \bfphi_h^{n+1})+b^*(\tilde \phi_h^{n+1}, u^{n+1},\tilde \bfphi_h^{n+1})
    .\label{banal3}
\end{align}
The first term on the right hand side of \eqref{banal3} is handled as in \eqref{banal2}, and for the second we employ H\"older, Sobolev and Young inequalities to find
\begin{align}
    b^*(\tilde \phi_h^{n+1}, u^{n+1},\tilde \bfphi_h^{n+1})
    & = (\tilde \phi_h^{n+1}\cdot \nabla u^{n+1},\tilde \bfphi_h^{n+1}) + \textcolor{black}{\frac{1}{2}} ( (\nabla \cdot \tilde \phi_h^{n+1}) u^{n+1},\tilde \bfphi_h^{n+1}) \nonumber \\
    & \le C \| \nabla u^{n+1} \|_{L^{\infty}} \|  \tilde \phi_h^{n+1} \|^2 + C \| \nabla \tilde \phi_h^{n+1} \| \| u^{n+1} \|_{L^{\infty}} \| \tilde \phi_h^{n+1}  \|  \nonumber \\
        & \le C (\| \nabla u^{n+1} \|_{L^{\infty}} + \nu^{-1} \| u^{n+1} \|_{L^{\infty}}^2 ) \|  \tilde \phi_h^{n+1} \|^2 + \textcolor{black}{\frac{\nu}{12}}  \| \nabla \tilde \phi_h^{n+1} \|^2.   \label{banal4}
\end{align}

\textcolor{black}{The above equation requires the use of the Gronwall inequality with $\Delta t$ sufficiently small, i.e. $\gamma_n \Delta t := C (||\nabla u^n||_{L^{\infty}}+\nu^{-1} \| u^{n} \|_{L^{\infty}}^2) \Delta t < 1$, which gives a dependence on $\nu^{-1}$ as stated in Remark \ref{remGron}.}

\section{Extension to coupled schemes}
\label{sec:CN_Analysis}

While the analysis above was to compare EMAC and SKEW for projection methods, the key difference was in the treatment of the nonlinear terms.  Hence, these longer time accuracy results for EMAC can be transferred to standard coupled schemes using mixed finite elements.  In this section, we consider convergence analysis of the Crank-Nicolson FEM, using both the EMAC and SKEW forms of the nonlinearity.

The Crank-Nicolson FEM scheme for EMAC is as follows: Find $(u_h^{n+1},p_h^{n+1})\in (X_h,Q_h)\times (0,T]$ satisfying for all $(v_h,q_h) \in (X_h,Q_h)$,
\begin{align}
\lip{\frac{u^{n+1}_h-u^n_h}{\Delta t}}{v_h}&+c(u_h\chalf,u_h\chalf,v_h)+\nu\lip{\nabla u_h\chalf}{\nabla v_h}\notag\\
&-\lip{p\chalf_h}{\divergence v_h}=\lip{f\pare{t\chalf}}{v_h},\label{eq:EMAC1}\\
\lip{\divergence u_h\chalf}{q_h}&=0.\label{eq:EMAC2}
\end{align}
For SKEW, the scheme is the same but replace $b^*$ with $c$ in the nonlinear term.

We now state a convergence theorem for the CN-FEMs for EMAC and SKEW.  

\begin{theorem}
(i) [SKEW CN-FEM convergence] Let $(u_h^{n+1},p_h^{n+1})$ solve \eqref{eq:EMAC1}-\eqref{eq:EMAC2} with SKEW nonlinearity, and $(u^{n+1},p^{n+1})$ be a NSE solution with $u_t^{n+1}\in X'$, $u^{n+1} \in H^3(\Omega)$, and $p^{n+1}\in H^2(\Omega)$, for $0 \leq n \leq M$.  Denote $e^n=u^n-u_h^n$, $\eta^n=u^n-\stokes{u^n}$, and $\phi_h^n=\stokes{u^n}-u^n_h$.  Then for all $0\leq n \leq M$, the following holds:
\begin{align*}
&\norm{e^M}^2+\nu\Delta t\sum_{n=0}^{M-1}\norm{\nabla e\chalf}^2\\
\leq & \exp\pare{C\Delta t\sum_{n=0}^{M-1}\pare{\norm{\nabla u\chalf}_{L^{\infty}}+\nu\inv\norm{u\chalf}^2_{L^{\infty}}+\frac{3}{2}}}F(\Delta t, h)\\
&+ C\nu(\Delta t)^4\trinorm{ \nabla u_{tt} }^2_{2,0}+C\nu h^{2k}\trinorm{ u}_{2,k+1},
\end{align*}
where
\begin{align*}
F(\Delta t, h)=&C\nu\inv h^{2k+1}\pare{\trinorm{u}^4_{4,k+1}+\norm{\:|\nabla u|\:}^4_{4,0}}\\
&+C\nu\inv h^{2k}\pare{\trinorm{u}^4_{4,k+1}+\nu\inv \pare{\norm{u_h}^2+\nu\inv \trinorm{f}^2_{2,X'}}}\\
&+C\nu\inv\pare{h^{2s+2}\trinorm{p_{\frac{1}{2}}}^2_{2,s+1}+(\Delta t)^4\trinorm{p_{tt}}^2_{2,0}}\\
&+Ch^{2k+2}\trinorm{u_t}^2_{2,k+1}+C\Delta t h^{2k+2}\norm{u_{tt}}^2_{L^2(0,T;H^{k+1})}\\
&+C(\Delta t)^4(\trinorm{u_{ttt}}_{2,0}^2 + \nu\inv\trinorm{p_{tt}}_{2,0}^2 + \trinorm{f_{tt}}_{2,0}^2 + \nu\trinorm{\nabla u_{tt}}_{2,0}^2\\
&+\nu\inv\trinorm{\nabla u_{tt}}_{4,0}^4+\nu\inv\trinorm{ \nabla u}^4_{4,0} + \nu\inv\trinorm{\nabla u_{1/2}}^4_{4,0}).
\end{align*}
(ii) [EMAC CN-FEM convergence] Let $(u_h^{n+1},p_h^{n+1})$ solve \eqref{eq:EMAC1}-\eqref{eq:EMAC2} with the EMAC nonlinearity, and under the same assumptions as part (i).  Then for all $0 \leq n \leq M$, the following holds:
\begin{align*}
& \norm{e^M}^2+\nu\Delta t\sum_{n=0}^{M-1}\norm{\nabla e\chalf}^2
\leq \exp\pare{C\Delta t\sum_{n=0}^{M-1}\pare{\norm{\nabla u\chalf}_{L^{\infty}}}}G(\Delta t, h;P)\\
&+ C\nu(\Delta t)^4\trinorm{\nabla u_{tt}}^2_{2,0}+C\nu h^{2k}\trinorm{u}^2_{2,k+1},
\end{align*}
where
\begin{align*}
G(\Delta t, h\textcolor{black}{;P})&=Ch^{2k}\pare{\trinorm{u}^2_{2,\infty}+\trinorm{u}^4_{4,k+1}}\\
&+Ch^{2k+1}\pare{\trinorm{\nabla u}^2_{2,\infty}+\trinorm{u}^4_{4,k+1}}\\
&+C\nu\inv\pare{h^{2s+2}\trinorm{ P_{\frac{1}{2}}}^2_{2,s+1}+(\Delta t)^4\trinorm{P_{tt}}^2_{2,0}}\\
&+Ch^{2k+2}\trinorm{ u_t}^2_{2,k+1}+C\Delta t h^{2k+2}\norm{u_{tt}}^2_{L^2(0,T;H^{k+1})}\\
&+C(\Delta t)^4(\trinorm{u_{ttt}}_{2,0}^2 + \nu\inv\trinorm{ P_{tt} }_{2,0}^2+\trinorm{f_{tt} }_{2,0}^2+\nu\trinorm{\nabla u_{tt} }_{2,0}^2\\
&+\nu\inv\trinorm{\nabla u_{tt}}_{4,0}^4+\nu\inv\trinorm{ \nabla u }^4_{4,0}+\nu\inv\trinorm{\nabla u_{1/2}}^4_{4,0}).
\end{align*}
\label{thm:error_thm}
\end{theorem}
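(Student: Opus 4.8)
The plan is to follow the template of the proof of Theorem~\ref{ErrorEstimate}, adapting it from backward Euler to Crank--Nicolson and from the projection split to the coupled system, so that the only structural difference between parts (i) and (ii) lies in the estimate of the nonlinear term. First I would split the error using the Stokes projection as already set up in the statement, writing $e^n = \eta^n - \phi_h^n$ with $\eta^n = u^n - \stokes{u^n}$ and $\phi_h^n = \stokes{u^n} - u_h^n \in V_h$ (so that $\phi_h\chalf = \frac12(\phi_h^{n+1}+\phi_h^n) \in V_h$). Evaluating the NSE at the midpoint $t\chalf$, subtracting the scheme \eqref{eq:EMAC1}, and testing the resulting error equation with $v_h = \phi_h\chalf$ produces the telescoping identity $\lip{\frac{\phi_h^{n+1}-\phi_h^n}{\delt}}{\phi_h\chalf} = \frac{1}{2\delt}\pare{\norm{\phi_h^{n+1}}^2 - \norm{\phi_h^n}^2}$ on the left, while $\phi_h\chalf \in V_h$ lets me discard the discrete pressure and reduce the continuous pressure term to an interpolation remainder $\norm{p\chalf - I_h(p\chalf)}$.

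The Crank--Nicolson consistency errors are the source of the many $(\Delta t)^4$ contributions in $F$ and $G$. These come from Taylor expanding, about $t\chalf$, the three mismatches: the time-derivative quotient $\frac{u^{n+1}-u^n}{\delt} - u_t(t\chalf)$, the midpoint value $u\chalf - \frac{u^{n+1}+u^n}{2}$ appearing inside the viscous, nonlinear and forcing terms, and the analogous pressure midpoint. Each expansion is standard and yields integral remainders controlled by $\trinorm{u_{ttt}}_{2,0}$, $\trinorm{\nabla u_{tt}}_{2,0}$, $\trinorm{p_{tt}}_{2,0}$ and $\trinorm{f_{tt}}_{2,0}$; I would bound these with Cauchy--Schwarz and Young, absorbing every $\frac{\nu}{c}\norm{\nabla \phi_h\chalf}^2$ factor into the viscous term on the left. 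The interpolation pieces $\eta\chalf$ and $\eta_t$ are handled with the optimal Stokes-projection bounds, producing the $h^{2k}$, $h^{2k+1}$, and $h^{2k+2}$ powers.

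The heart of the argument, and the place where the two parts diverge, is the nonlinear difference $c(u\chalf,u\chalf,\phi_h\chalf) - c(u_h\chalf,u_h\chalf,\phi_h\chalf)$ (respectively with $b^*$). For EMAC I would decompose it exactly as in the proof of Theorem~\ref{ErrorEstimate}: a term linear in $\eta\chalf$ tested against $u\chalf$, a term with $\bD(\stokes{u\chalf})$ acting on $\eta\chalf$, and the critical self-term $([\bD(\stokes{u\chalf}) + \tfrac12 \divergence \stokes{u\chalf}]\phi_h\chalf, \phi_h\chalf)$. Using the $L^{\infty}$-stability \eqref{eq:Stokes_approx} of the Stokes projection, this self-term is bounded by $C\norm{\nabla u\chalf}_{L^{\infty}}\norm{\phi_h\chalf}^2$, so the Gronwall exponent depends only on $\norm{\nabla u\chalf}_{L^{\infty}}$ and carries no explicit $\nu\inv$. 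For SKEW the corresponding self-term is $b^*(\phi_h\chalf, u\chalf, \phi_h\chalf) = (\phi_h\chalf \cdot \nabla u\chalf, \phi_h\chalf) + \tfrac12((\divergence \phi_h\chalf)\, u\chalf, \phi_h\chalf)$, and the divergence piece forces an integration giving $\norm{\nabla \phi_h\chalf}\,\norm{u\chalf}_{L^{\infty}}\norm{\phi_h\chalf}$; Young's inequality then contributes $C\nu\inv\norm{u\chalf}_{L^{\infty}}^2\norm{\phi_h\chalf}^2$ to the Gronwall factor, exactly as in \eqref{banal4}, which is the source of the $\nu\inv\norm{u\chalf}_{L^{\infty}}^2$ term in part~(i).

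Finally I would multiply through by $2\delt$, sum from $n=0$ to $M-1$ to telescope the $\norm{\phi_h}^2$ differences, and apply the discrete Gronwall inequality under the smallness condition $\gamma_n \delt < 1$ with $\gamma_n$ the respective nonlinear coefficient; the triangle inequality against $\eta$ then converts the bound on $\phi_h$ into the stated bound on $e$. The main obstacle I anticipate is bookkeeping: Crank--Nicolson generates far more consistency remainders than backward Euler, so the delicate part is organizing the Taylor expansions and matching each remainder to the correct norm in $F$ and $G$, while making sure no step secretly introduces a $\nu\inv$ into the EMAC Gronwall exponent. In particular one must track that the EMAC pressure variable is $P = p - \tfrac12|u|^2$ rather than $p$, which is exactly why $G$ is stated in terms of $P$.
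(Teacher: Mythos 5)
Your proposal is correct and follows essentially the same route as the paper: the paper's proof simply invokes the known SKEW Crank--Nicolson analysis from the standard references and substitutes the EMAC nonlinearity decomposition and bounds \eqref{ss6-7}--\eqref{ss8emac} from the proof of Theorem \ref{ErrorEstimate}, which is precisely the skeleton you reconstruct (Stokes-projection splitting, testing with $\phi_h\chalf\in V_h$, CN consistency expansions, and the divergence-term distinction that puts $\nu\inv\norm{u\chalf}_{L^\infty}^2$ into the SKEW Gronwall factor but not EMAC's). Your additional bookkeeping of the $(\Delta t)^4$ remainders and the observation that $G$ must be stated in terms of $P=p-\tfrac12|u|^2$ are consistent with the theorem as stated.
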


\begin{proof}
The result for SKEW is known \cite{J16,Laytonbook} while the result for EMAC follows in the same manner as for SKEW in these references but with the nonlinearity treatment from the previous section, \textcolor{black}{particularly, the proof for Theorem \ref{ErrorEstimate}.  We apply the same decomposition, which follows \cite{OR20}, and then the same bounds \eqref{ss6-7}-\eqref{ss8emac}.  The rest of the proof follows identically to the proof for SKEW.}
\end{proof}

\begin{remark}
Just as in the projection method case, the asymptotic error is the same but the key difference between EMAC and SKEW accuracy for the Crank-Nicolson FEM is the reduced Gronwall constant of EMAC - which here does not explicitly depend on the inverse of the viscosity.
\end{remark}

\section{Numerical tests}
\label{sec:Num_Tests}

In this section, we provide numerical results that reinforce the strengths of EMAC over SKEW, in particular the longer time accuracy of EMAC suggested by the better Gronwall constant in its convergence analysis.  
We test both coupled schemes for SKEW and EMAC, as well as projection methods.  We use Freefem++ \cite{hecht} to perform these simulation.  Newton iterations are used to resolve the nonlinearities.  \textcolor{black}{While our analysis considered first order projection methods for the sake of simplicity, for our numerical tests we will additionally test with the second order `rotational' projection method defined as follows:}
\\
\\
Step 1 RotProjB-EMAC: Find $\tilde{u}_h^{n+1}\in X_h$ satisfying, for all $\bfchi_h \in X_h$,
\begin{align}
\begin{cases}
&\frac{1}{\Delta t}\lip{\tilde{u}_h^{n+1}-u_h^n}{\bfchi_h}+\frac{1}{4}c(\tilde{u}_h^{n+1}+u_h^n,\tilde{u}^{n+1}+u_h^n,\bfchi_h)+\frac{\nu}{2}\lip{\nabla(\tilde{u}_h^{n+1}+u^n)}{\nabla\bfchi_h}\\
&\quad -\lip{p_h^n}{\divergence \bfchi_h}=\lip{f\chalf}{\bfchi_h},\\
&\tilde{u}_h^{n+1}=0 \text{ on } \partial \Omega.
\end{cases}
\label{eq:RotProjB-EMAC}
\end{align}
Step 1 RotProjB-SKEW is the same as Step 1 RotProjB-EMAC expect the nonlinear term.

Step 2: Solve for $\phi^{n+1}$
\begin{align}
\begin{cases}
&\frac{-\divergence \tilde{u}_h^{n+1}}{\Delta t}+\Delta \phi_h^{n+1}=0,\\
&u^{n+1}_h\cdot n=0 \text{ on } \partial \Omega.
\end{cases}
\label{eq:RotProjPart2}
\end{align}
From here, we first recover $p_h^{n+1}$ from $\phi_h^{n+1}=\frac{1}{2}\pare{p_h^{n+1}-p_h^n+\nu\divergence \tilde{u}_h^{n+1}}$, and then recover $u_h^{n+1}$ from $\tilde u_h^{n+1}$ and $p_h^{n+1}$ using the projection \cite{GMS06}.  \textcolor{black}{This formulation is a second order method from (3.6)-(3.8) in \cite{GMS06}, equipped with the EMAC nonlinearity formulation.}

\subsection{Planar Lattice Flow}

We first consider an investigation of the evolution of an initial velocity and flow of four vortices which are rotating opposite to one another.  This particular phenomenon, named "planar lattice flow", is a solution to the stationary incompressible Euler equation and has been studied in detail in \cite{SL18,SL17,SLLL18}.  Let $x\in \Omega=(0,1)^2$ and define the initial velocity and true velocity/pressure pair $(u,p)$ as
\begin{align*}
u_0(x)&=\bmat{\sin(2\pi x_1)\sin(2\pi x_2)\\ \cos(2\pi x_1)\cos(2\pi x_2)},\\
u(t,x)&=u_0(x)e^{-8\pi^2\nu t},\\
p(t,x)&=\frac{1}{4}\brak{\cos(4\pi x_1)-\cos(4\pi x_2)}e^{-16\pi^2\nu t}.
\end{align*}

Periodic boundary conditions are imposed on $\partial \Omega$ and we enforce the integral zero-mean condition on the pressure.  We do not impose an external force, so $f=0$, and we set $\nu=4\times 10^{-6}$.  \textcolor{black}{Crank-Nicolson time stepping was used for the coupled schemes (not including SKEW-BE-PROJ and EMAC-BE-PROJ, which use Backward Euler),} and we set $\Delta t=.001$ with the end time $T=5$ for all methods.  A uniform mesh with $(P_2,P_1)$ Taylor hood elements was used with a mesh width of $h=\frac{1}{48}$.  Figure \ref{fig:PLF_init} depicts the initial velocity of the problem.

\begin{figure}[h!]
\centering
\includegraphics[scale=.3]{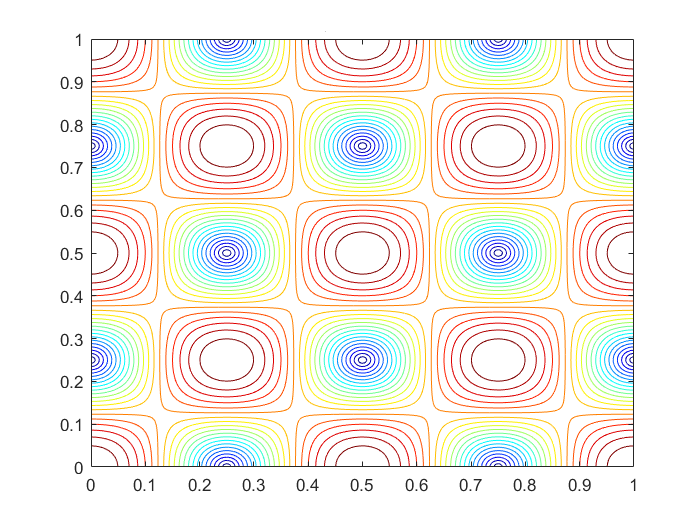}
\caption{Shown above is the initial velocity $u_0$ for planar lattice flow.}
\label{fig:PLF_init}
\end{figure}

The true solution of this problem is for it to decay exponentially with time, but to remain stationary in space.
\textcolor{black}{As time goes on, the term $e^{-8\pi^2\nu t}$ will uniformly decay the initial condition.  Plots of computed solutions at t=5 are shown in figure \ref{fig:PLF}. For each formulation using the SKEW nonlinear term, we observe oscillations to the point where do not see anything that resembles the correct solution.  However, the EMAC formulations strongly resemble Figure \ref{fig:PLF_init}, although some error is clearly present.}  

\textcolor{black}{Figure \ref{fig:PLF_error} shows a semilog plot of $L^2$ error at every timestep for each of the six methods, and as expected we see much better performance from the EMAC methods over the SKEW methods.  Specifically, we notice nearly identical error for EMAC and RotProjB-EMAC, where EMAC-BE-PROJ performs slightly worse.  This is expected because it is a first order method, so it naturally will not outperform EMAC and RotProjB-EMAC.  There does not seem to be a huge difference in $L^2$ error otherwise.  The SKEW methods do not perform well at all over time, and we observe a very large $L^2$ error which level off at $O(10)$ only due to the stability of the method.}

\begin{figure}[h!]
\centering
\includegraphics[scale=.25]{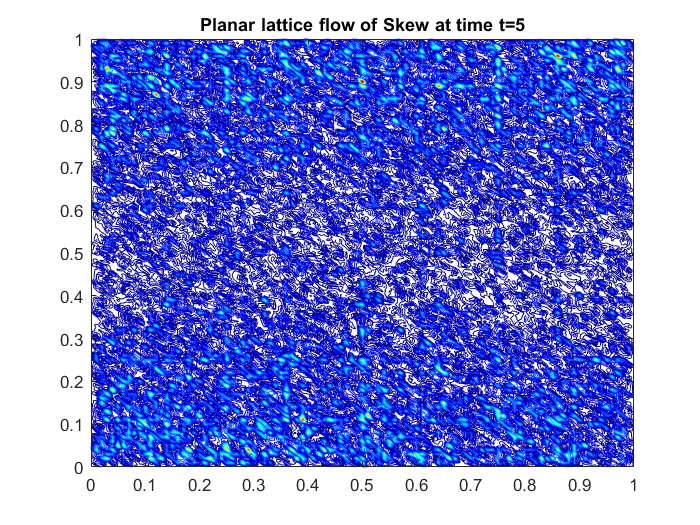}
\includegraphics[scale=.25]{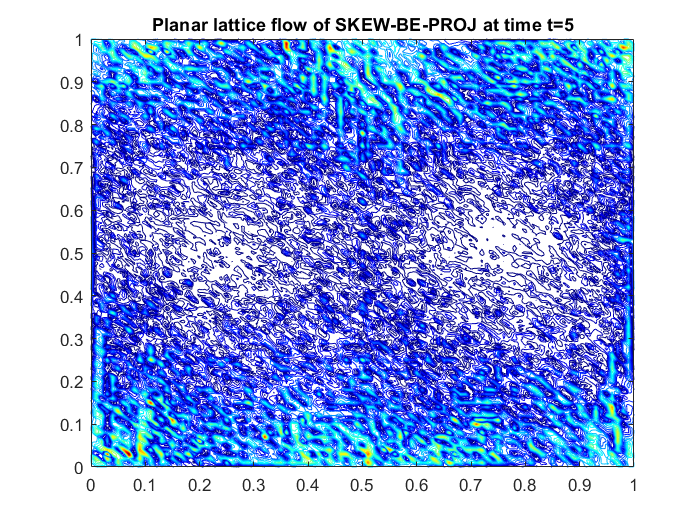}
\includegraphics[scale=.25]{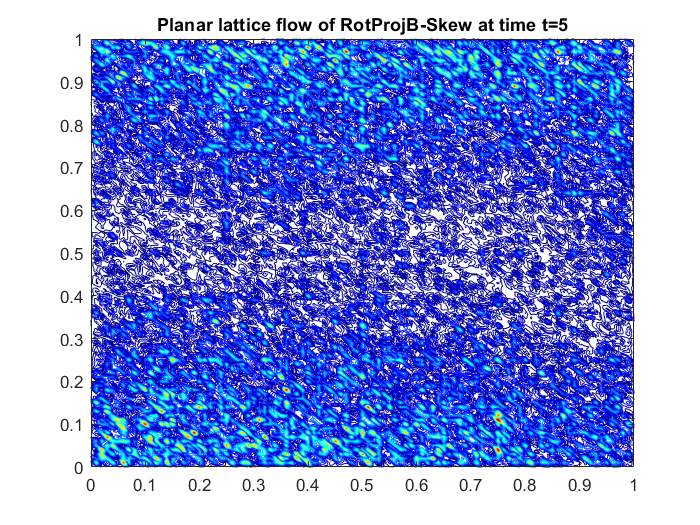}\\
\includegraphics[scale=.25]{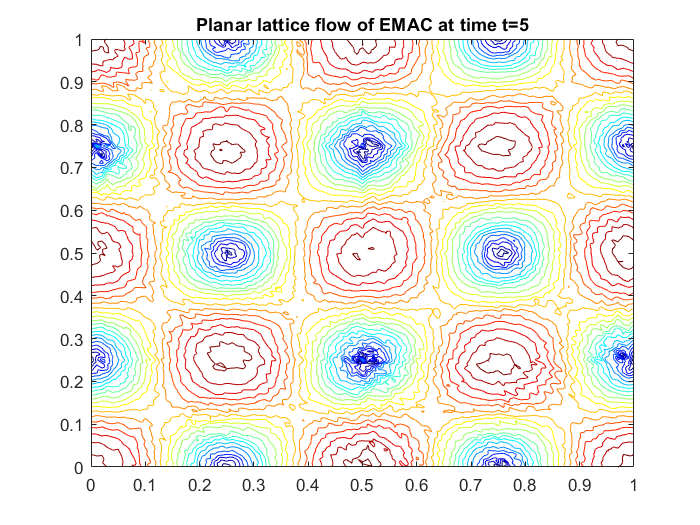}
\includegraphics[scale=.25]{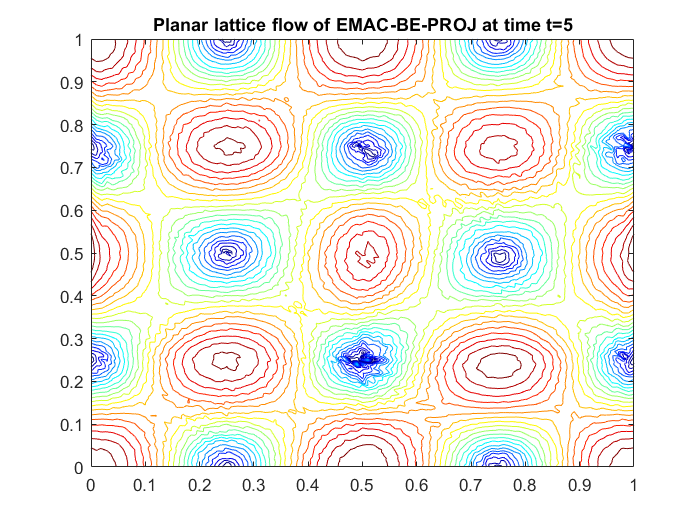}
\includegraphics[scale=.25]{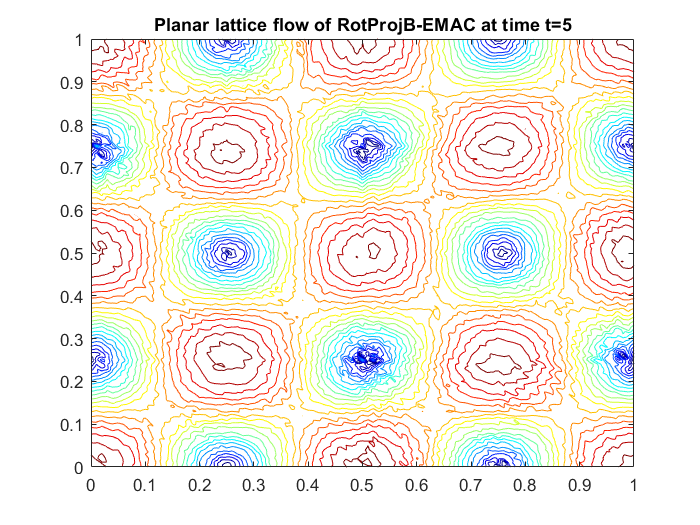}\\
\caption{Plots of the solution of each formulation at time $t=5$}
\label{fig:PLF}
\end{figure}

\begin{figure}[h!]
\centering
\includegraphics[scale=.4]{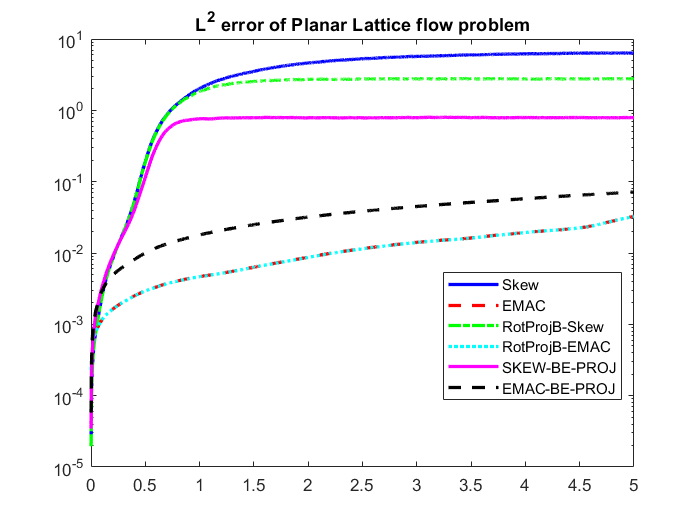}
\caption{Shown above $L^2$ error for each formulation vs. time for the planar lattice vortex problem.}
\label{fig:PLF_error}
\end{figure}

\subsection{Gresho problem}

We next test the methods on the Gresho standing vortex problems.  Computations involving coupled schemes using SKEW and EMAC are well documented \cite{CHOR17, OR20}, but we include them here with projection method results for the sake of comparison.  The initial velocity is a solution to the steady Euler equations, so it is stationary in time, which makes it easy to measure the effectiveness of a formulation.  It will also help measure how effective a method is in conservation properties, since we assume $f=0$ and $\nu=0$.

We begin by defining $r=\sqrt{x^2+y^2}$ on $\Omega=(-0.5,0.5)^2$.  The velocity and pressure are defined as
\begin{align*}
&u=\begin{cases}
\bmat{-5y\\5x} &\text{for }r< 2,\\
\bmat{\frac{2y}{r}+5y\\\frac{2x}{r}-5x} &\text{for }.2 \leq r\leq .4,\\
\bmat{0\\0} &\text{for }r>.4,
\end{cases}\\
&p=\begin{cases}
12.5r^2+C_1 &\text{for }r<.2,\\
12.5r^2-20r+4\log(r)+C_2 &\text{for }.2 \leq r \leq .4,\\
0 &\text{for }r>.4,
\end{cases}
\end{align*}
where
\begin{align*}
&C_2=-12.5(.4)^2+20(.4)^2-4\log(.4),\\
&C_1=C_2-20(.2)+4\log(.2).
\end{align*}

We again use Crank-Nicolson time stepping for the coupled schemes \textcolor{black}{and Backward Euler for SKEW-BE-PROJ and EMAC-BE-PROJ}.  We solve using $\Delta t=.01$ and set $T=4$.  Taylor-Hood $(P_2,P_1)$ elements were used with a mesh size of $h=\frac{1}{48}$.  Similar to the planar lattice flow problem in the previous section, we have an initial velocity (shown in figure \ref{fig:True_Gresho}) that we compare to the computed solutions at later times.  The difference is that the vortex should maintain its shape because it is a solution to a steady-state problem.  However, numerical errors are inevitable, and accumulate after many iterations.  Previous work in \cite{CHOR17, I21} shows that EMAC tends to outperform most conventional formulations over time, and we will again observe this phenomena for this test problem.

\begin{figure}[h!]
\centering
\includegraphics[scale=.4]{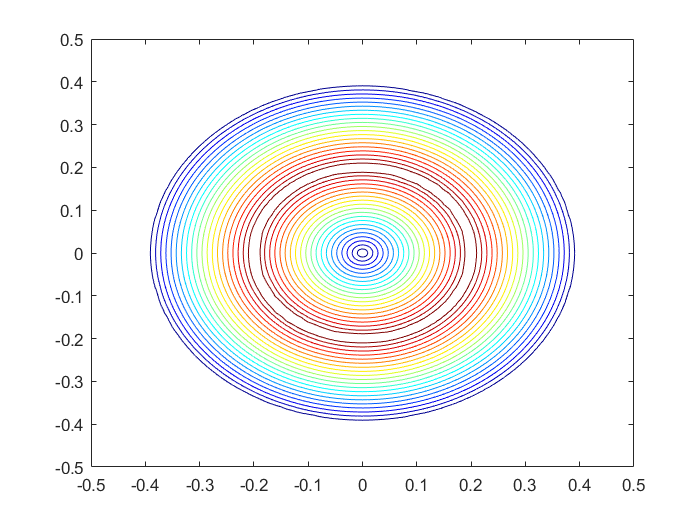}
\caption{Initial velocity for the Gresho problem.}
\label{fig:True_Gresho}
\end{figure}

\begin{figure}[h!]
\centering
\begin{footnotesize}
\end{footnotesize}
\includegraphics[scale=.16]{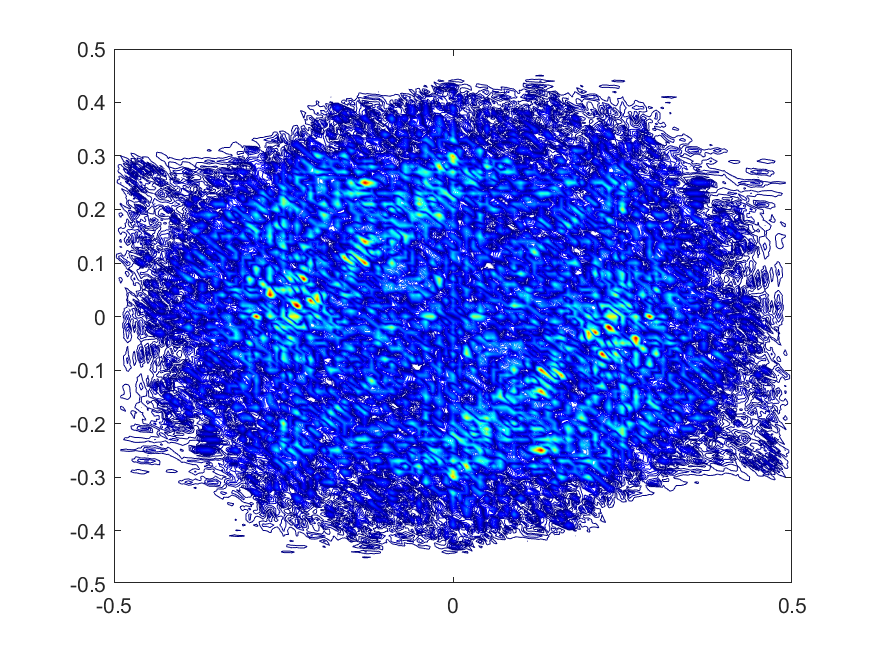}
\includegraphics[scale=.16]{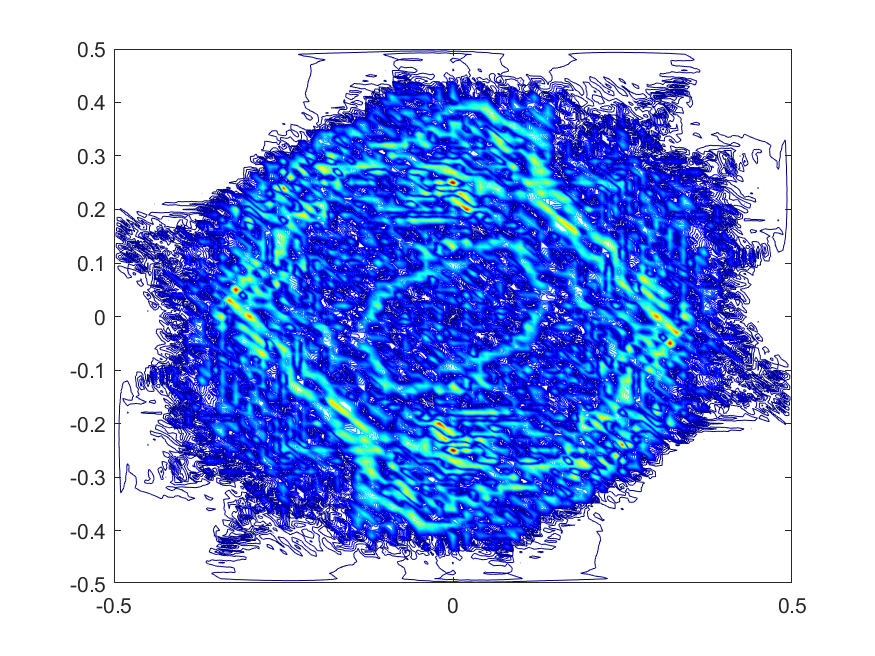}
\includegraphics[scale=.16]{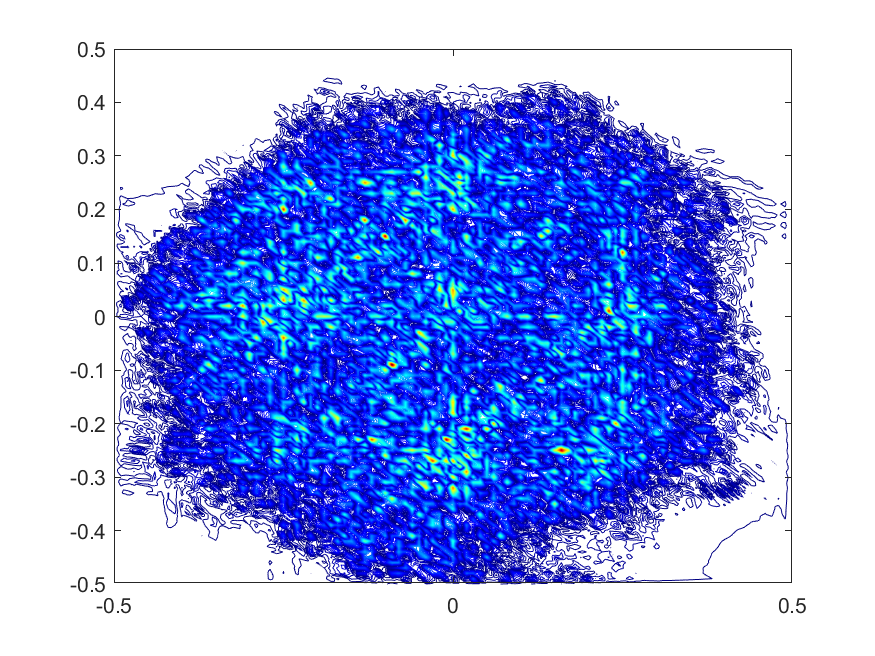}
\includegraphics[scale=.16]{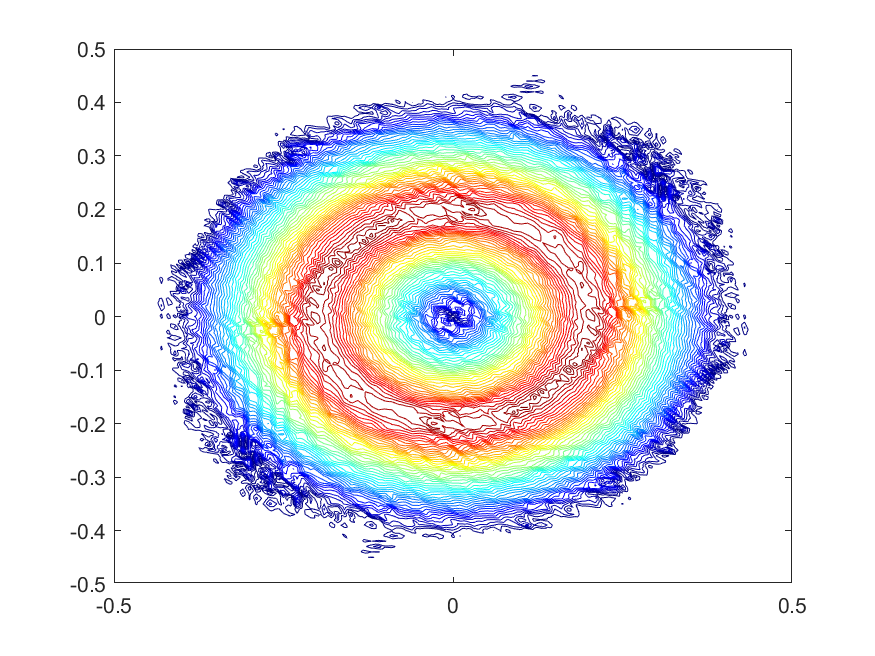}
\includegraphics[scale=.16]{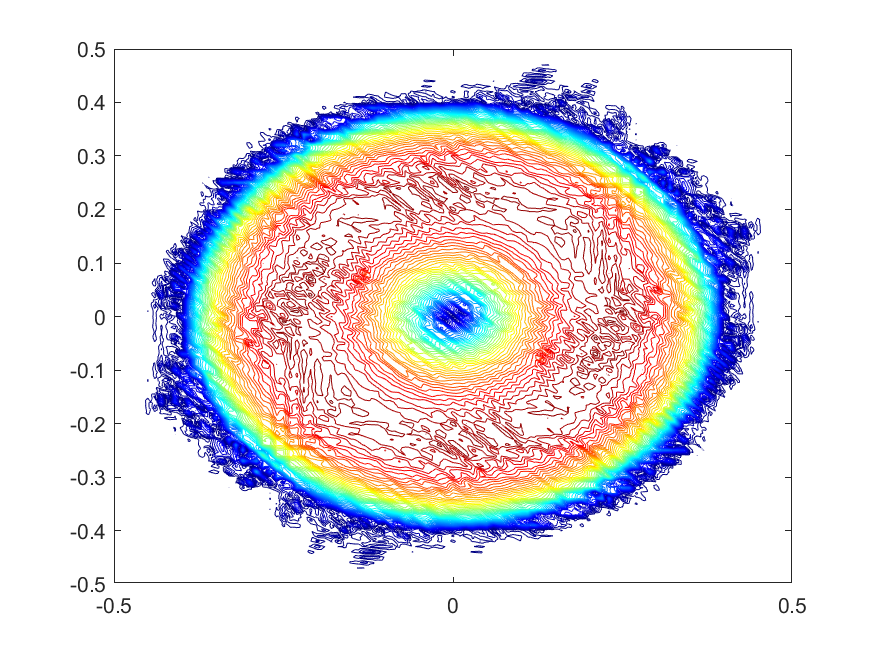}
\includegraphics[scale=.16]{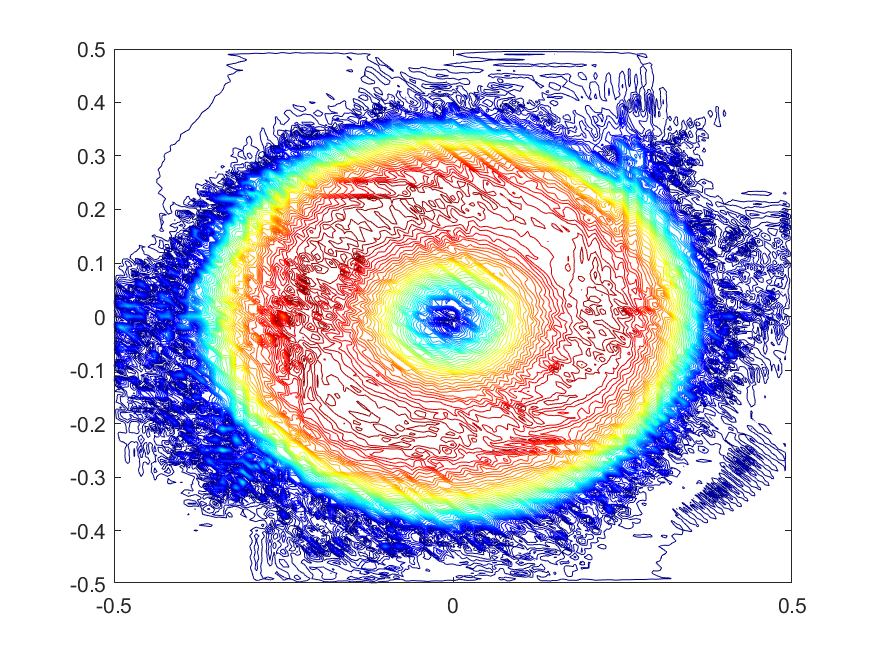}\\
\includegraphics[scale=.16]{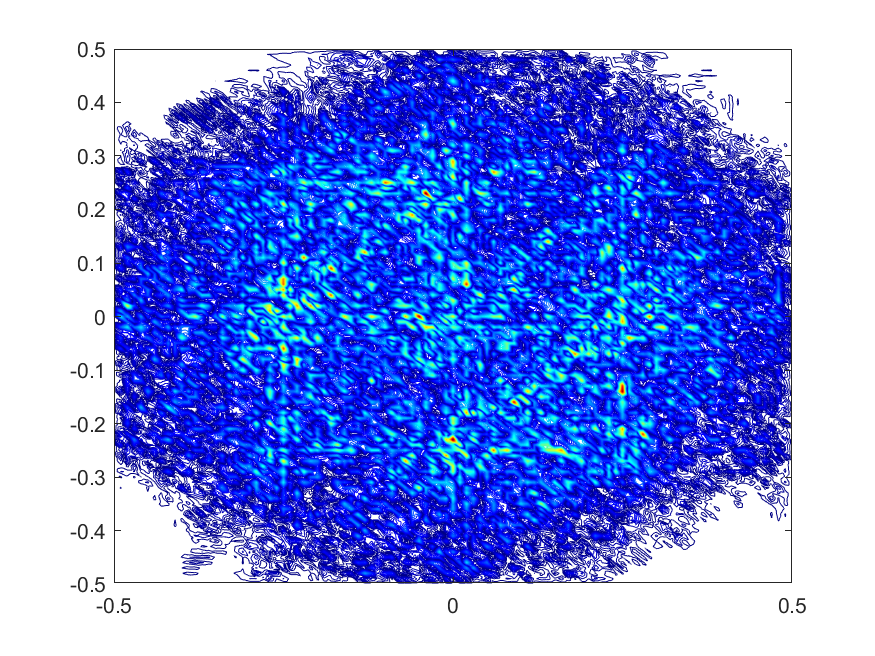}
\includegraphics[scale=.16]{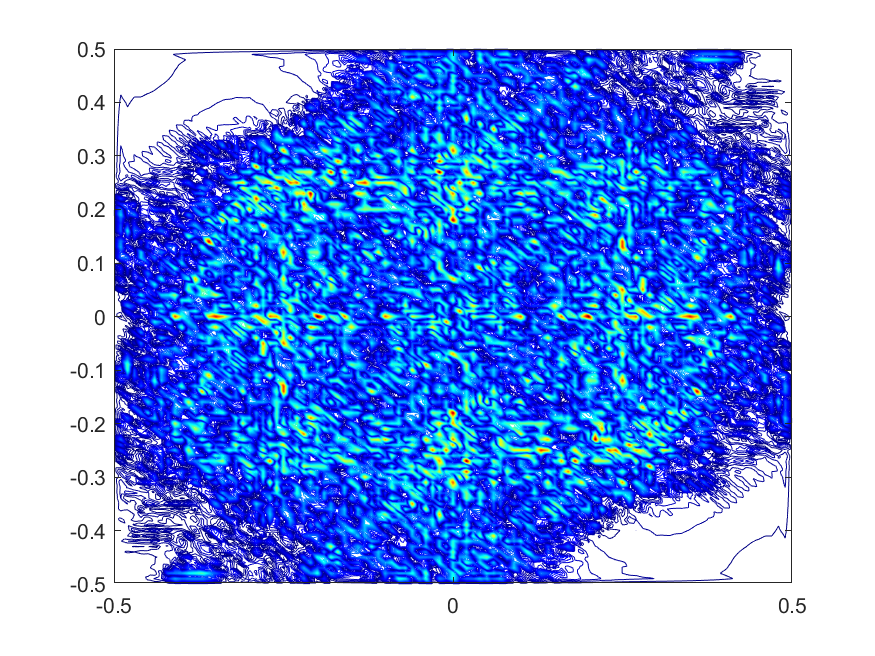}
\includegraphics[scale=.16]{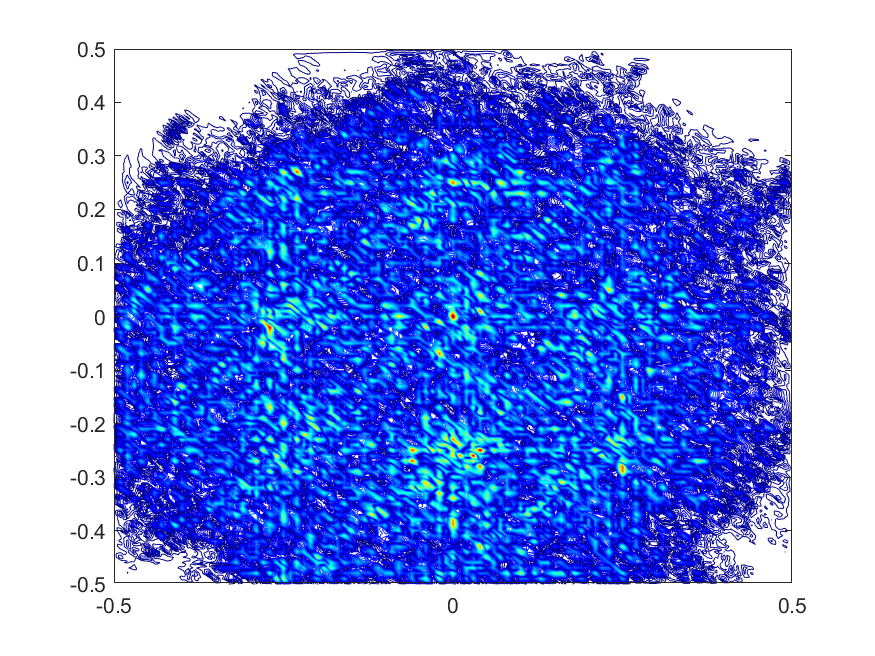}
\includegraphics[scale=.16]{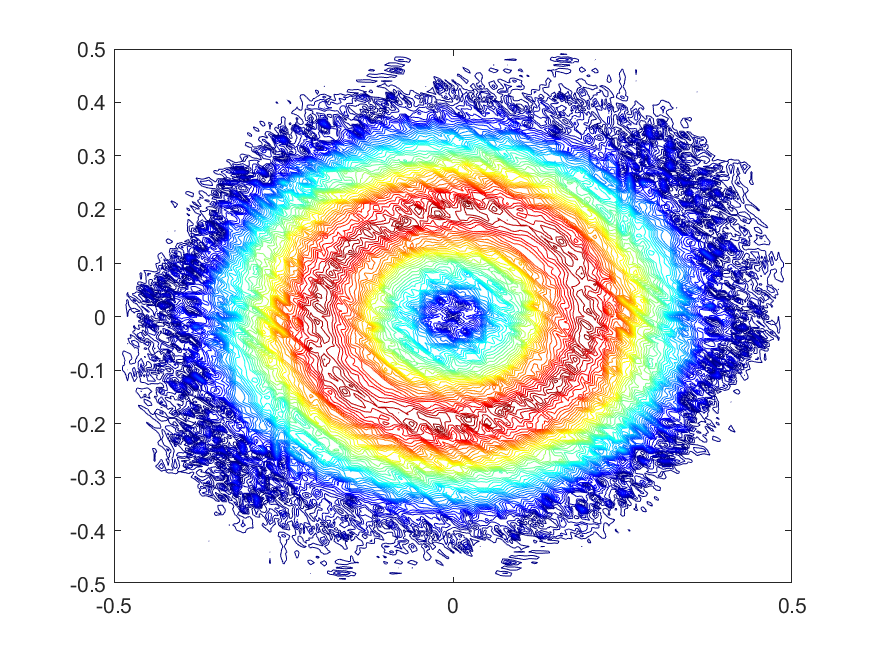}
\includegraphics[scale=.16]{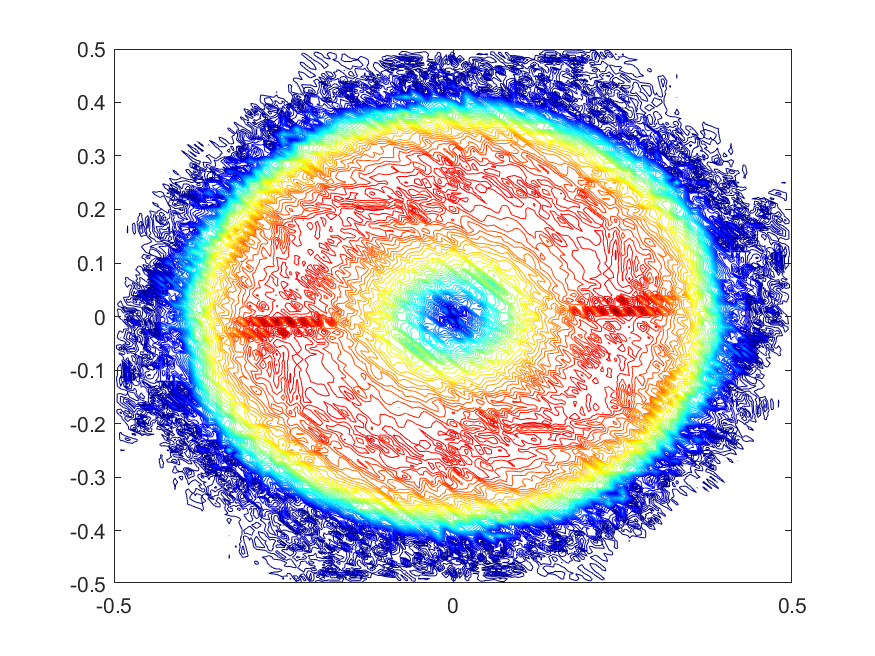}
\includegraphics[scale=.16]{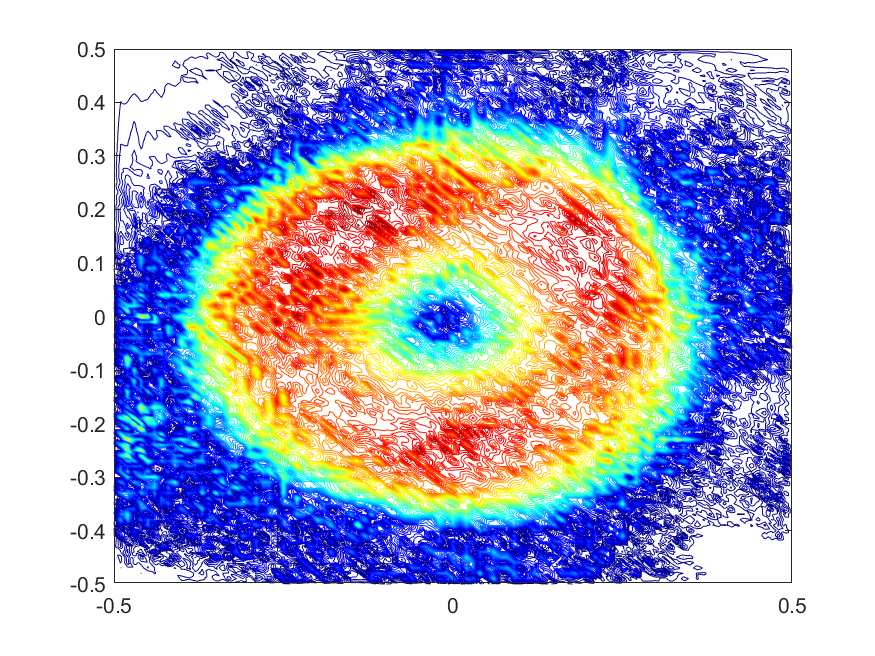}\\
\includegraphics[scale=.16]{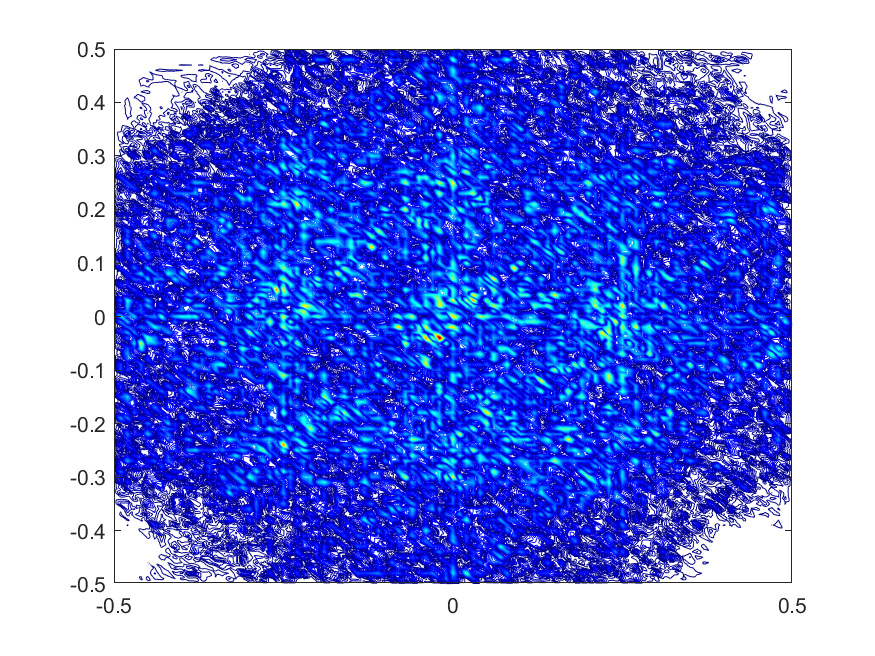}
\includegraphics[scale=.16]{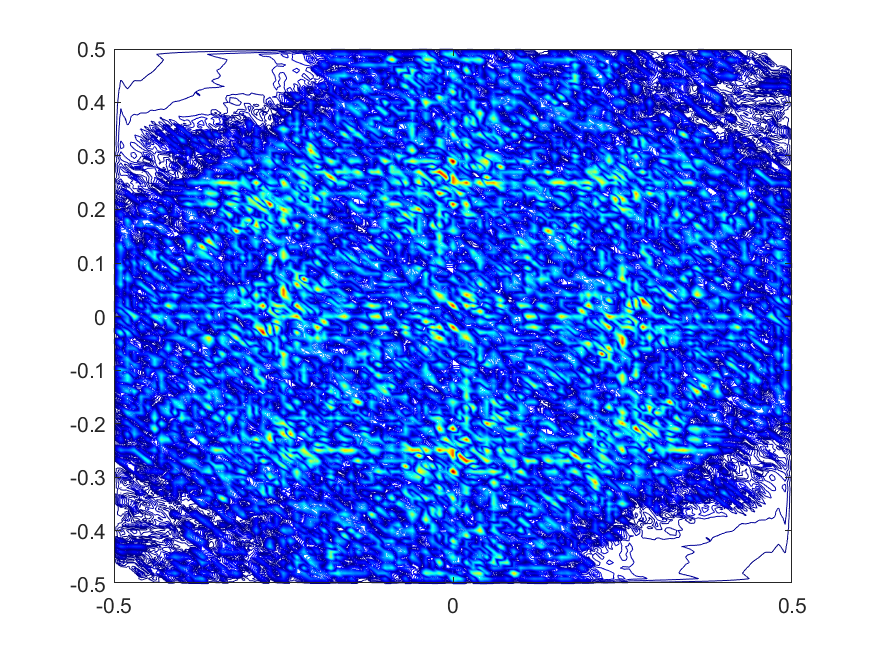}
\includegraphics[scale=.16]{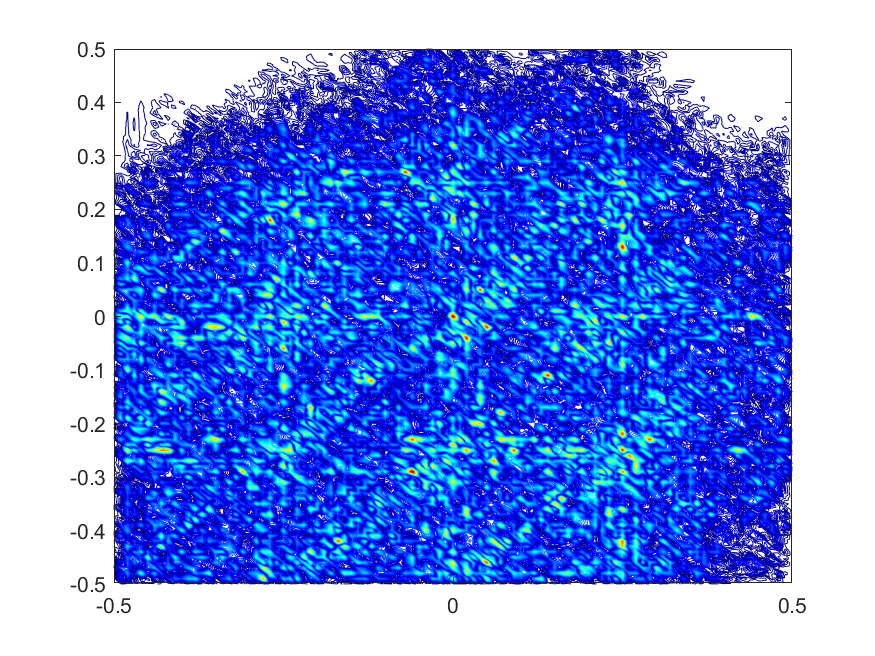}
\includegraphics[scale=.16]{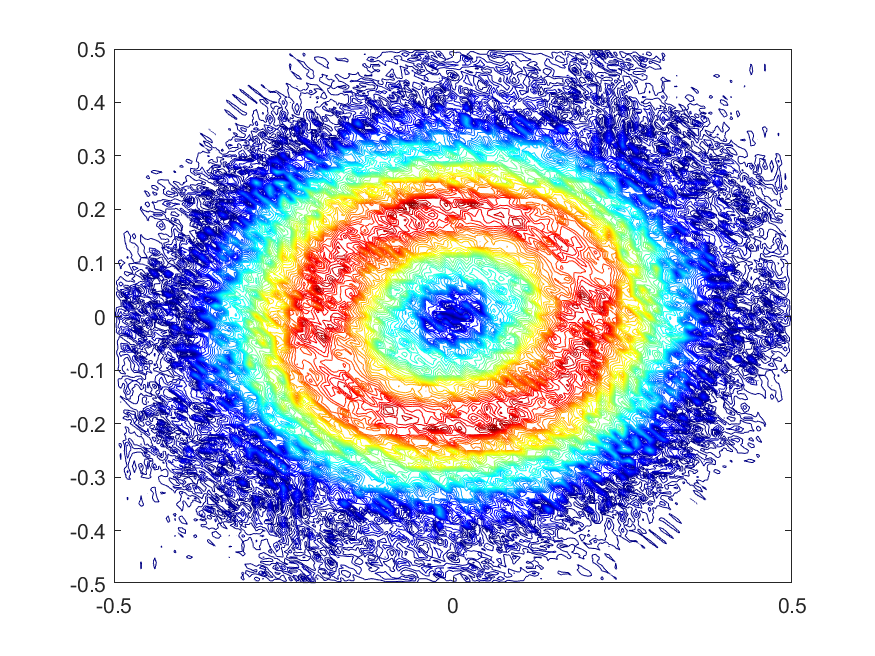}
\includegraphics[scale=.16]{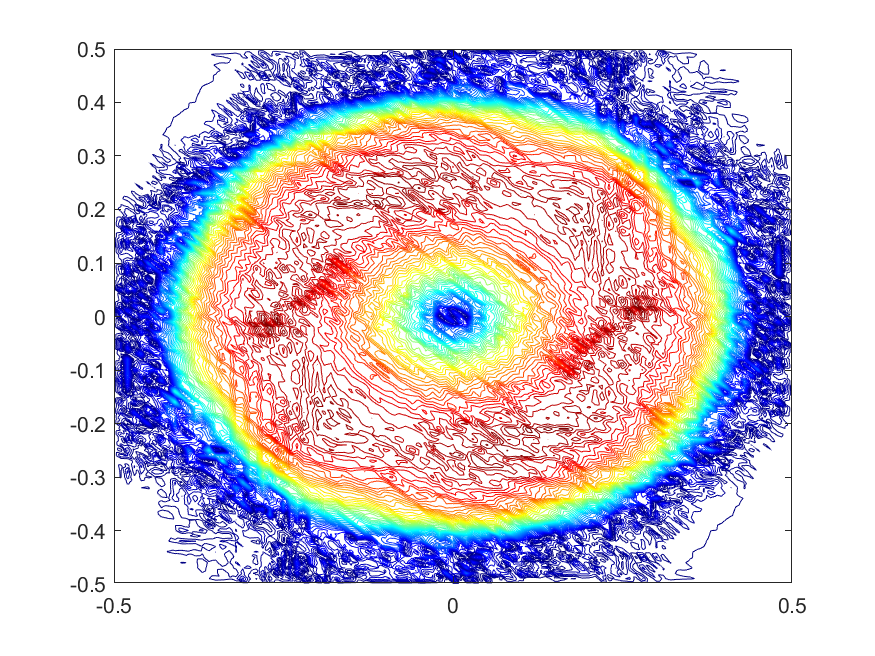}
\includegraphics[scale=.16]{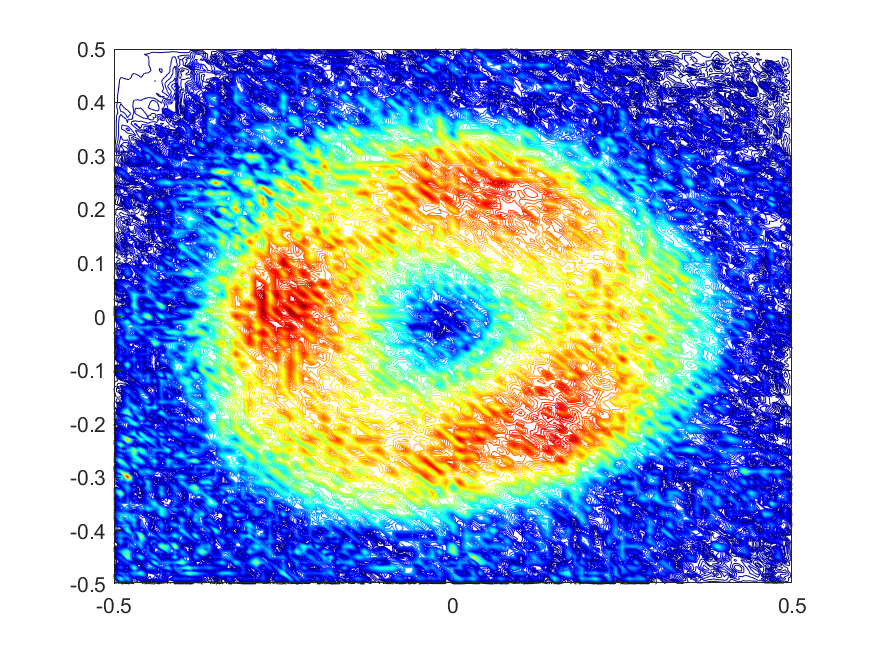}\\
\includegraphics[scale=.16]{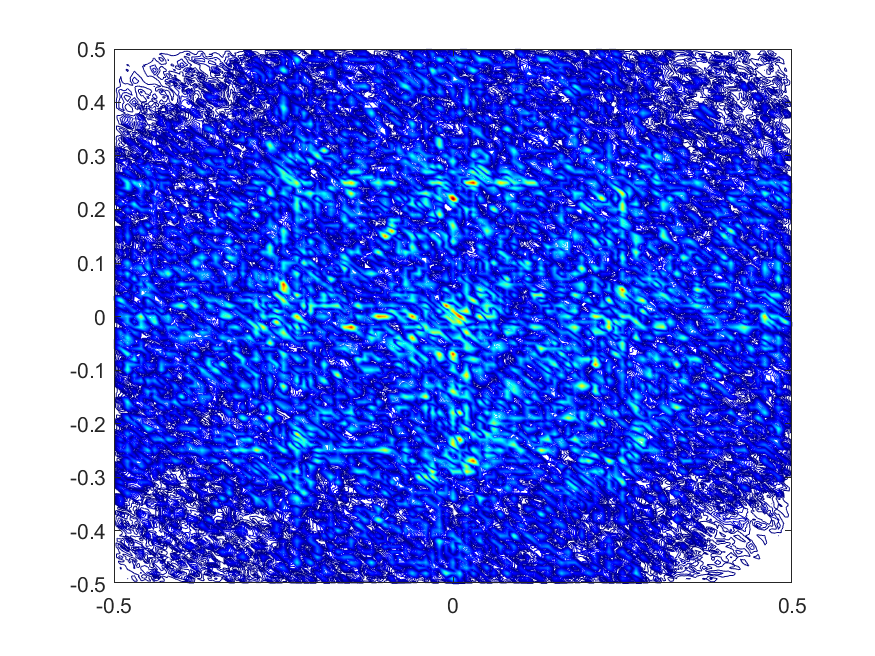}
\includegraphics[scale=.16]{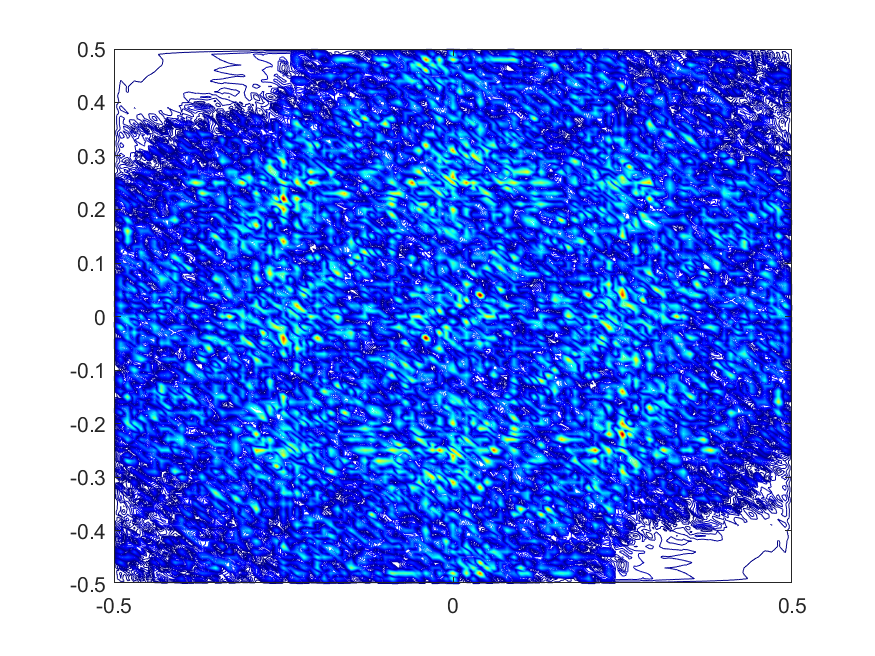}
\includegraphics[scale=.16]{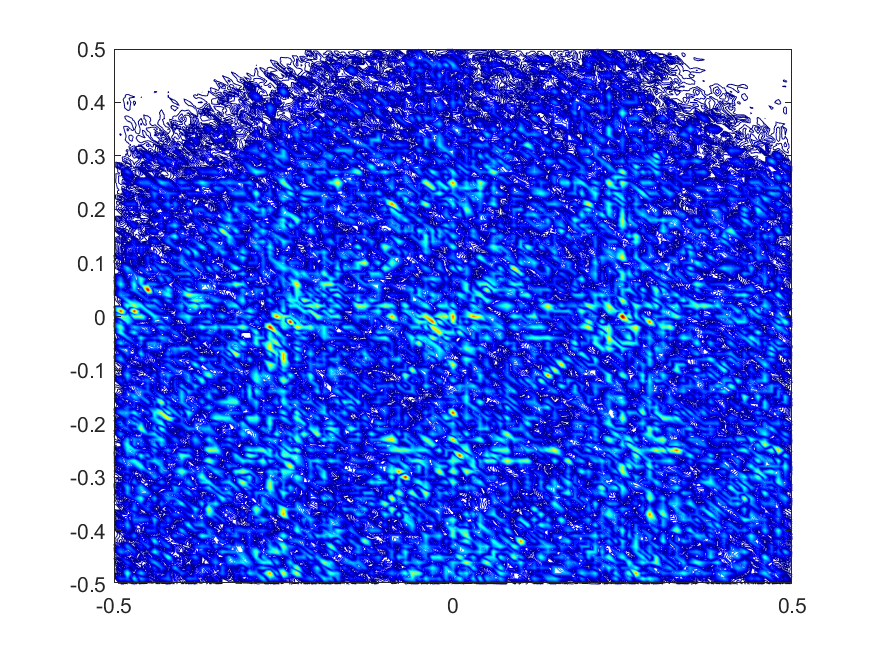}
\includegraphics[scale=.16]{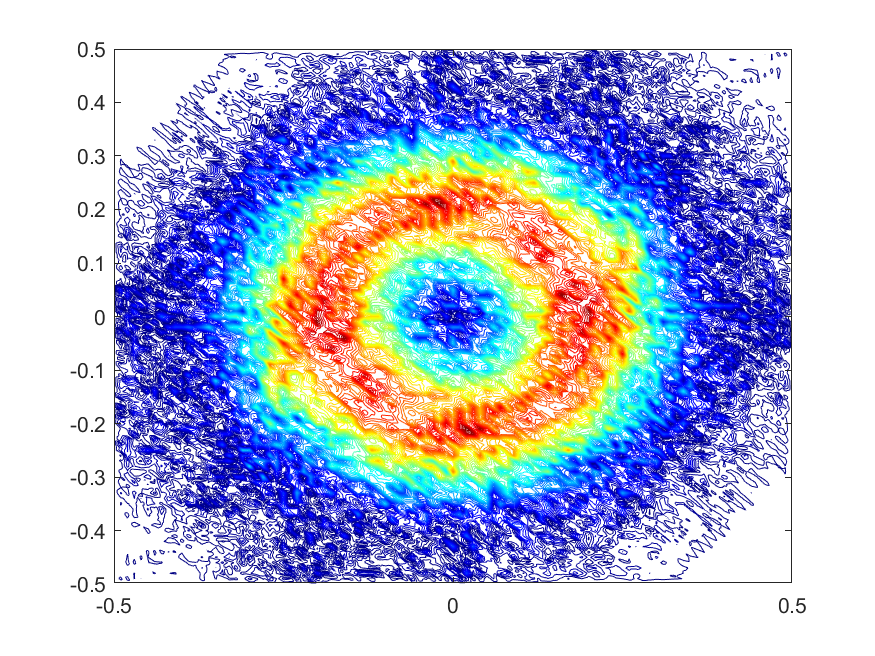}
\includegraphics[scale=.16]{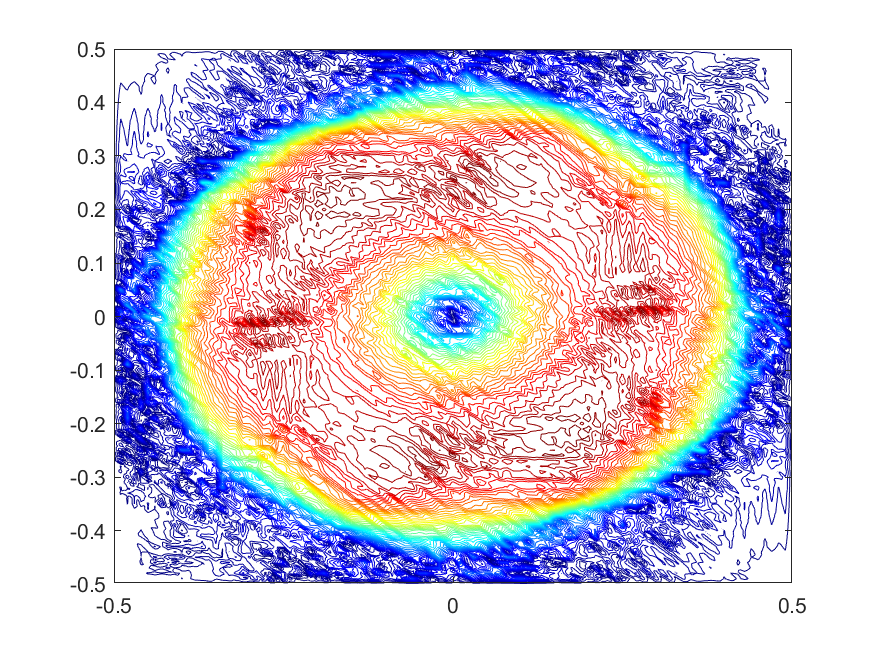}
\includegraphics[scale=.16]{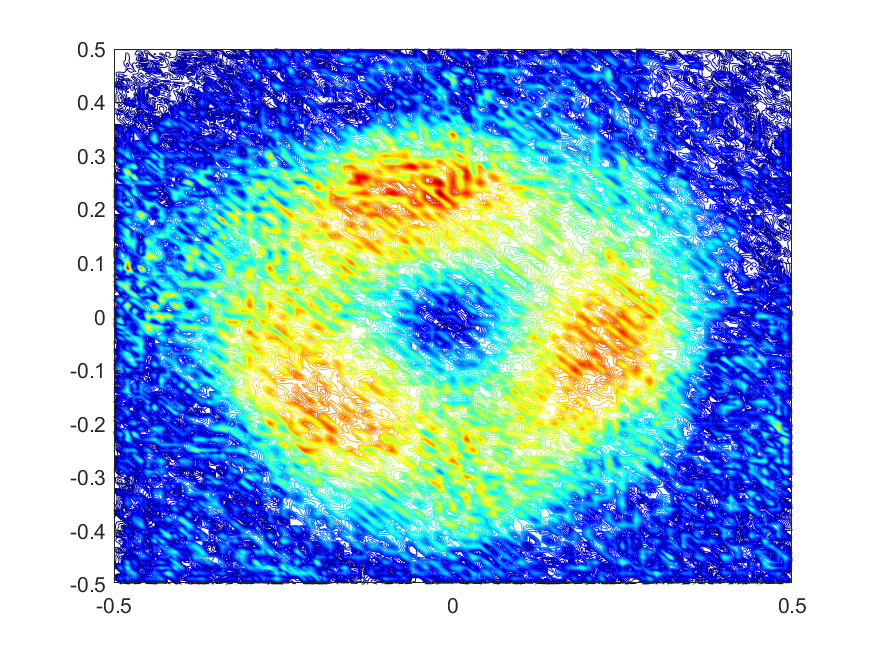}\\
\caption{\textcolor{black}{Numerical results of SKEW coupled, SKEW-BE-PROJ, RotProjB-SKEW, EMAC coupled, EMAC-BE-PROJ, RotProjB-EMAC (from left to right, respectively) at times $t=1,2,3,4$ (top to bottom).}}
\label{fig:Gresho}
\end{figure}

\textcolor{black}{The initial velocity is shown in figure \ref{fig:True_Gresho}, and computed solutions at $t=1,2,3,4$  are shown in figure \ref{fig:Gresho}.
SKEW, SKEW-BE-PROJ, and RotProjB-SKEW produce very poor solutions, as the velocity has dissipated very rapidly compared to the EMAC solutions. Coupled EMAC, EMAC-BE-PROJ, and RotProjB-EMAC do a better (although not great) job at maintaining the vortex for longer times, with the coupled EMAC outperforming the other EMAC methods, especially at later times.}  

\textcolor{black}{Additionally, we calculated $L^2$ error, energy, momentum, and angular momentum for each method.  Coupled EMAC is the only one to conserve energy, with coupled SKEW producing slight dissipation (the severe oscillations in coupled SKEW were enough to cause it to slightly dissipate energy after $t=0.5$).  All projection methods significantly dissipate energy, nonphysically reducing it by nearly 50\% by the end time.  For momentum, the coupled schemes, SKEW-BE-PROJ, and EMAC-BE-PROJ maintain a constant momentum while both projection methods had slight decreases.  We expect this behavior for the four formulations that conserved it, as that is what the theory suggests.  The RotProjB methods both do not have formal conservation analysis done, so we did not specifically expect conservation.}  

\textcolor{black}{For angular momentum, we do not observe conservation for any of the SKEW methods, which is unsurprising.  However we see conservation for both EMAC and EMAC-BE-PROJ, which are of course expected.  If one takes a closer look at the bottom right hand corner of the angular momentum plot, they would notice some small bumps in the angular momentum for RotProjB-EMAC.  Once again, we do not have any conservation results on this formulation, but it does not seem like angular momentum is necessarily conserved here.  Lastly, the $L^2$ error plot shows the EMAC schemes dramatically outperformed the SKEW schemes, with coupled EMAC beating out RotProjB-EMAC and SKEW-BE-PROJ.  Interestingly enough, the $L^2$ error for EMAC-BE-PROJ starts worse off than RotProjB-EMAC, but at around $t=2.5$ it is smaller.  
}

\begin{figure}[H]
\centering
\includegraphics[scale=.4]{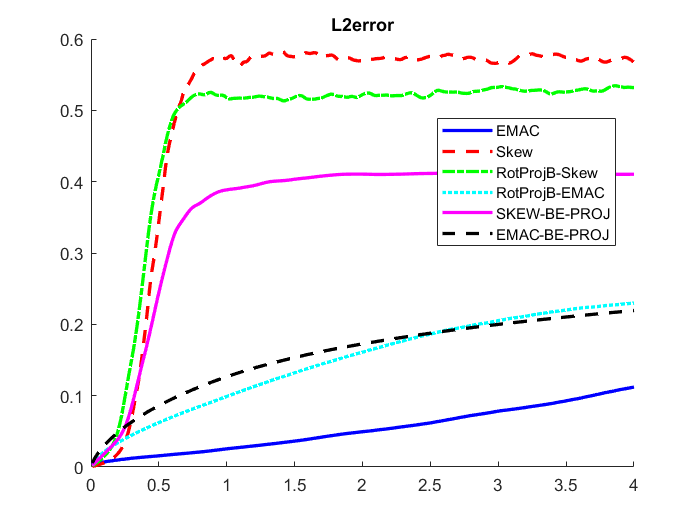}
\includegraphics[scale=.4]{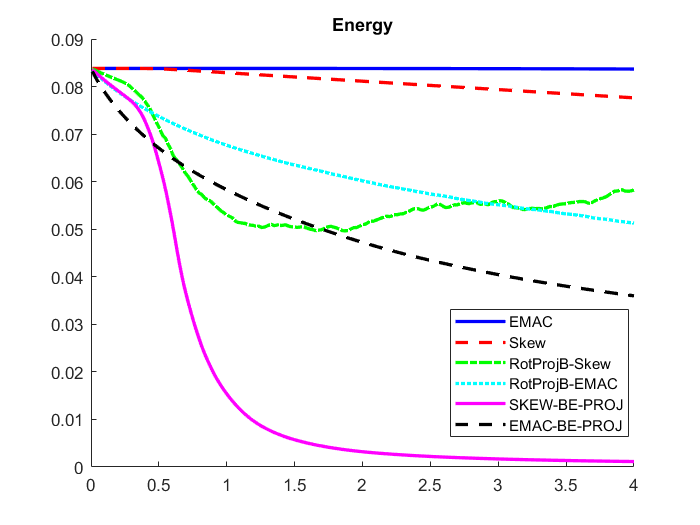}\\
\includegraphics[scale=.4]{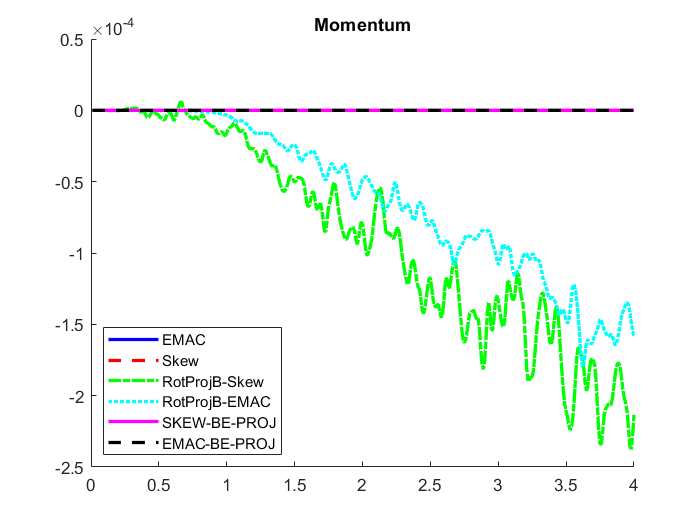}
\includegraphics[scale=.4]{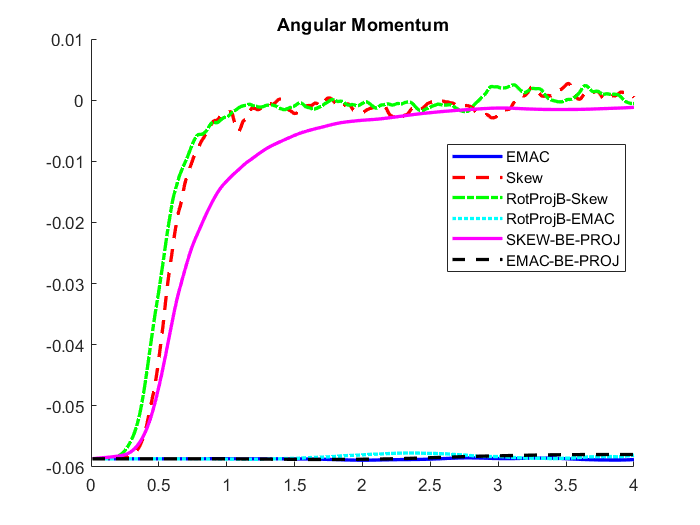}
\caption{\textcolor{black}{$L^2$ error, energy, momentum, and angular momentum plots of SKEW, EMAC, SKEW-BE-PROJ, EMAC-BE-PROJ, RotProjB-SKEW, and RotProjB-EMAC.}}
\label{fig:Gresho_Quant}
\end{figure}

\subsection{Contaminant Flow Analysis}

For our last test, we consider EMAC and SKEW coupled schemes for prediction of river contamination.  We chose the three rivers in Pittsburgh PA (USA) where they meet; in Figure \ref{fig:Pitt_Rivers}, observe the northeastern river (Allegheny River) and the southeastern river (Monongahela River) meet to form the Ohio river. The contaminant is modeled with the fluid transport equation 
\begin{align*}
c_t+u \cdot \nabla c-\eps \Delta c = 0,
\end{align*}
where $c$ is the contaminant, $u$ is the velocity, and \textcolor{black}{$\varepsilon$} is the diffusion coefficient.

\begin{figure}[H]
\centering
\includegraphics[width=.4\textwidth, height=.4\textwidth, viewport=0 0 400 300, clip]{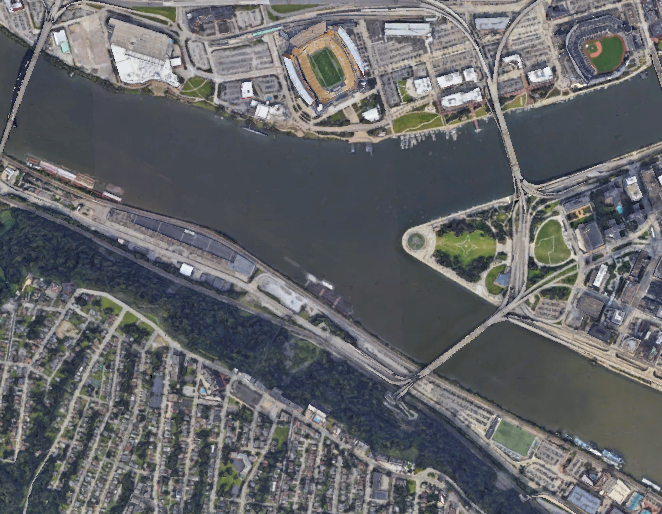}
\includegraphics[width=.45\textwidth, height=.43\textwidth, viewport=0 40 480 390, clip]{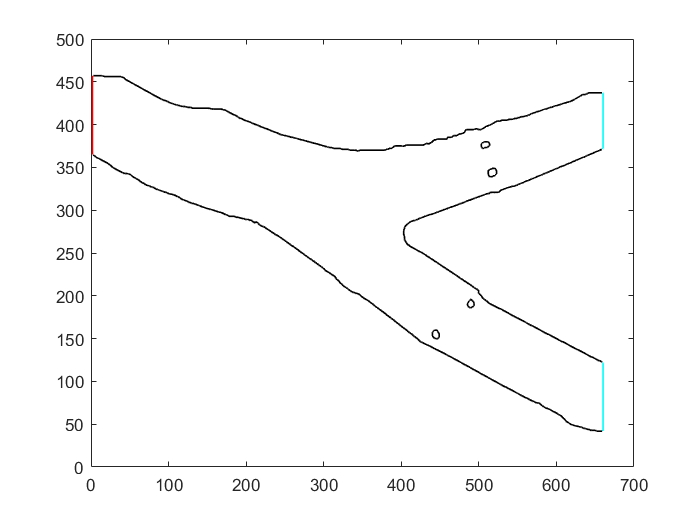}
\caption{Satellite image of the rivers in Pittsburgh, PA}
\label{fig:Pitt_Rivers}
\end{figure}

A domain is created (shown in figure \ref{fig:Pitt_Rivers}) by converting the image to grayscale via the isoline package.  The bridges were pre-edited out, and were replaced with pylons represented by circles.  The domain is such that there are two inlets and one outlet, where the two inlets are the Allegheny and Monongahela Rivers and the outlet is the Ohio River.  The inlets are colored cyan on the right hand side and the outlet is colored red.

We used the Stokes solution for the initial condition, BDF2 timestepping with $\Delta t=.01$ together with Taylor-Hood $(P_2,P_1)$ elements , $Re=\nu^{-1}=100$, and an end time of $T=15$.  Additionally, we used grad-div stabilization with $\gamma=1$ and Newton iterations to solve the nonlinear problem.  For both EMAC and SKEW we used a constant inflow of $u=\bmat{-20,0}^T$ on the Monongahela and Allegheny inflows as well as do-nothing outflow on the Ohio River.  

For the contaminant flow, we used an initial condition of
\begin{align*}
    c=
    \begin{cases}
    1, \quad &\text{if }(x-567)^2+(y-371)^2<25^2,\\
    1, \quad &\text{if }(x-567)^2+(y-131)^2<25^2,\\
    0, \quad &\text{otherwise}.
    \end{cases}
\end{align*}
This gives 2 circles at the same $x$ coordinate on both the Monongahela and Allegheny rivers.  We took $\epsilon=.001$, used $P_2$ elements, BDF2 timestepping and $\Delta t=0.01$.  Lastly, we had do-nothing boundary conditions for each boundary except the inflows, which were set to zero.

We ran simulations with 112229 total degrees of freedom to compare SKEW and EMAC, and also computed a reference solution using 249162 degrees of freedom and the convective nonlinear term, up to time $t=15$.  The solution at $t=15$ is shown in figure \ref{fig:ref_solution}.  EMAC and SKEW solutions are shown in plots at times $t=1, \: 9,$ and 15 in figure \ref{fig:CF_velocity}; we observe that the SKEW solution has significant oscillations which destroy its solution, while the EMAC solution remains stable and exhibits only minor oscillations.  Similarly with the concentrations shown in figure \ref{fig:CF_plots}, where EMAC is stable and matches the resolved solution qualitatively well but SKEW's solution is killed by oscillations.

\begin{figure}[h!]
    \centering
    \includegraphics[scale=.4]{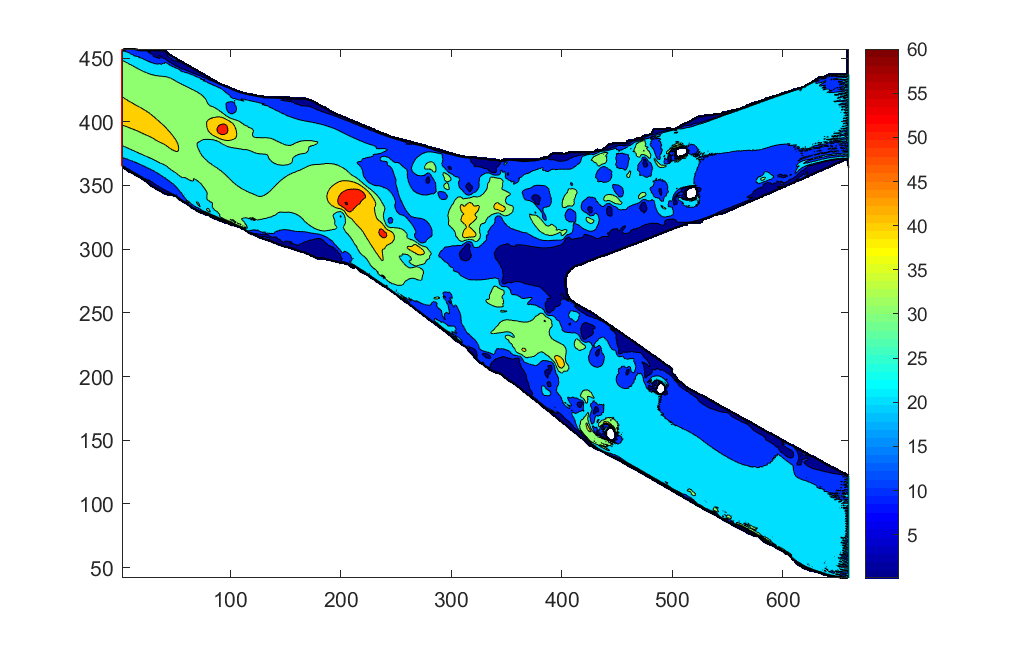}
    \includegraphics[scale=.32]{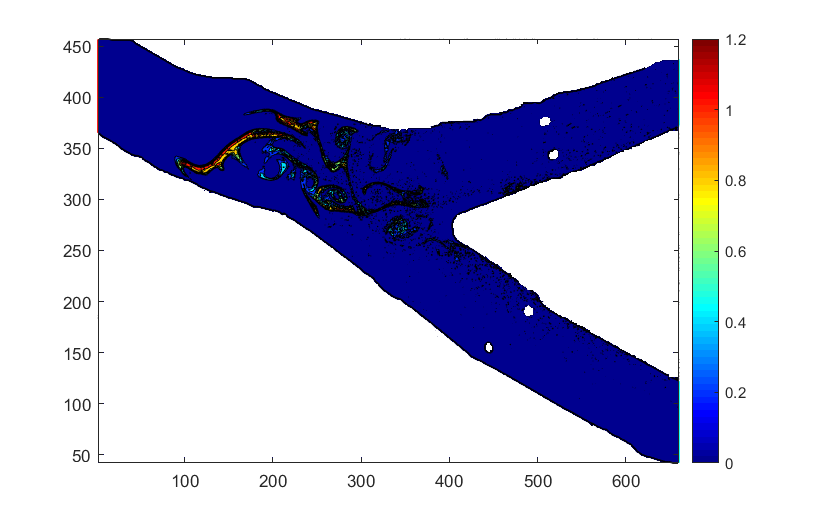}
    \caption{Reference velocity (left) and contaminant flow (right) at time $t=15$.}
    \label{fig:ref_solution}
\end{figure}

\begin{figure}[h!]
    \centering
    \includegraphics[scale=.4]{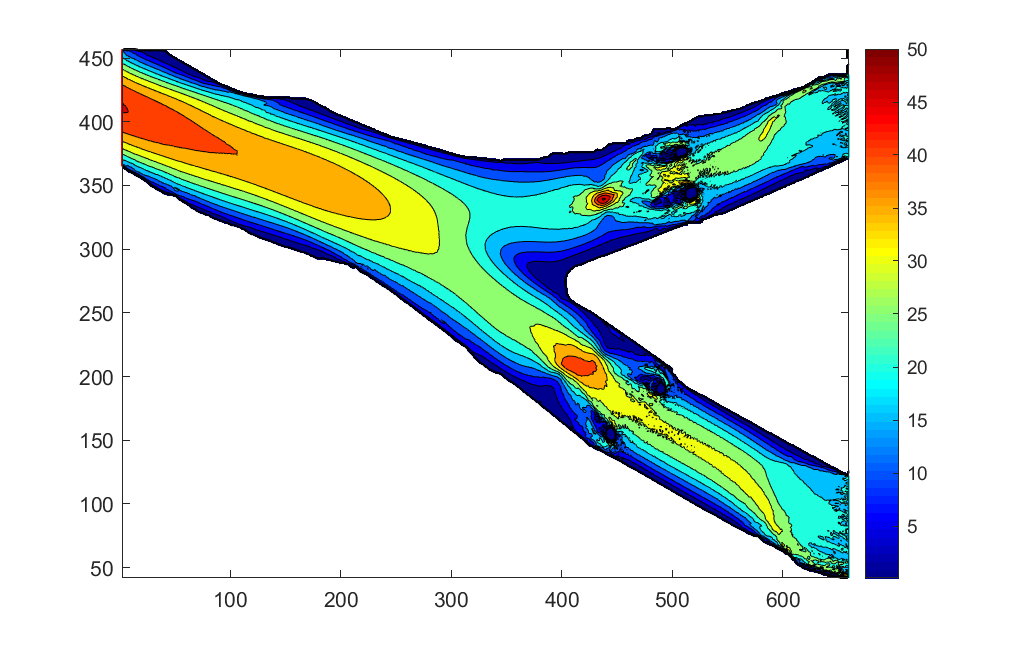}
    \includegraphics[scale=.4]{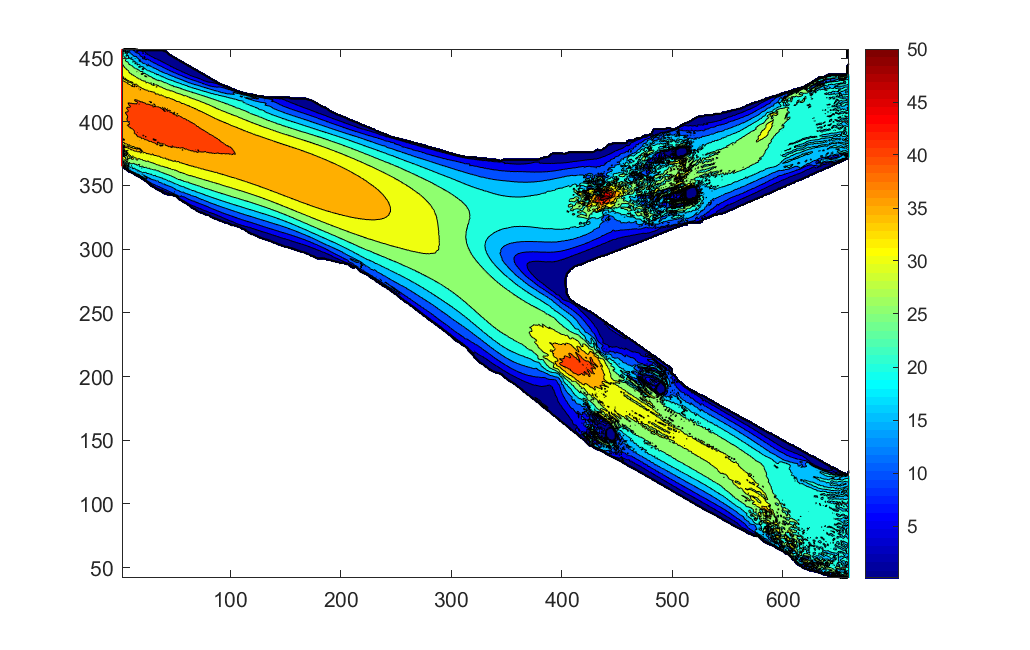}\\
    \includegraphics[scale=.4]{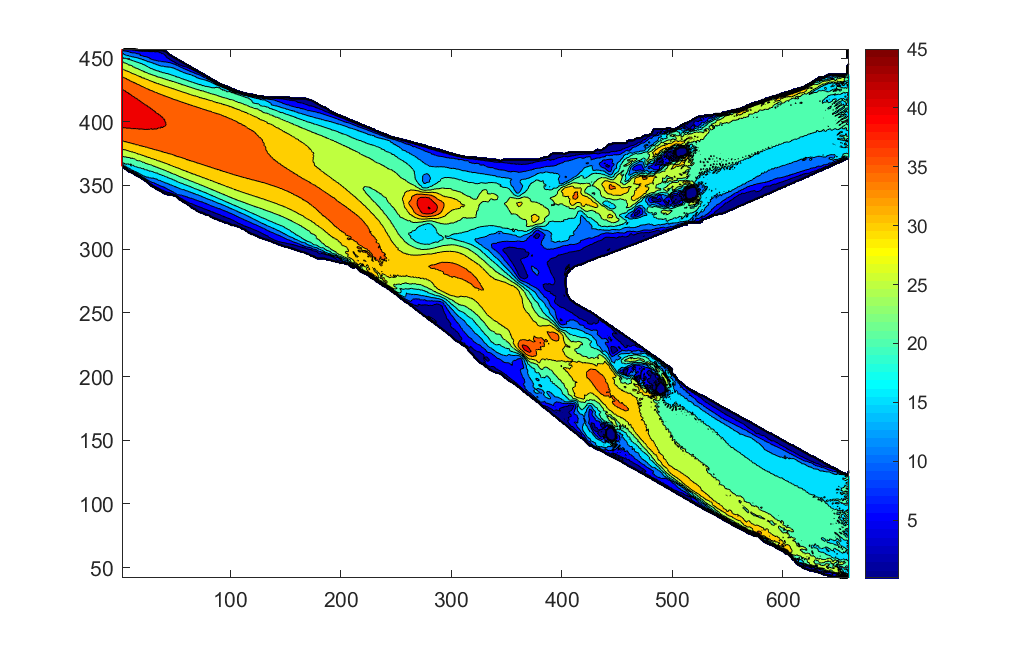}
    \includegraphics[scale=.4]{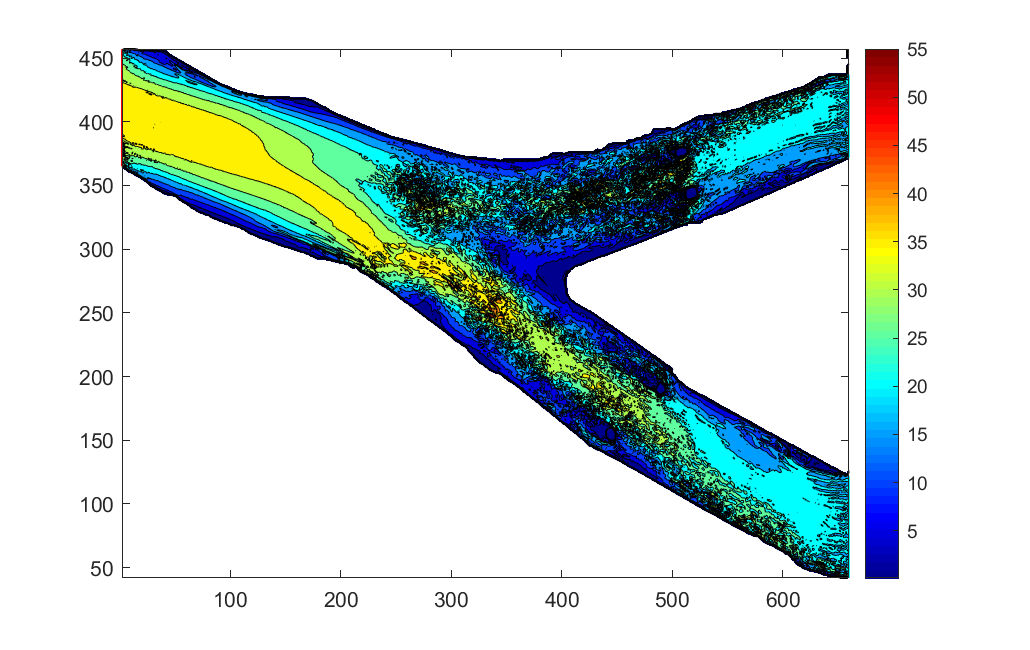}\\
    \includegraphics[scale=.4]{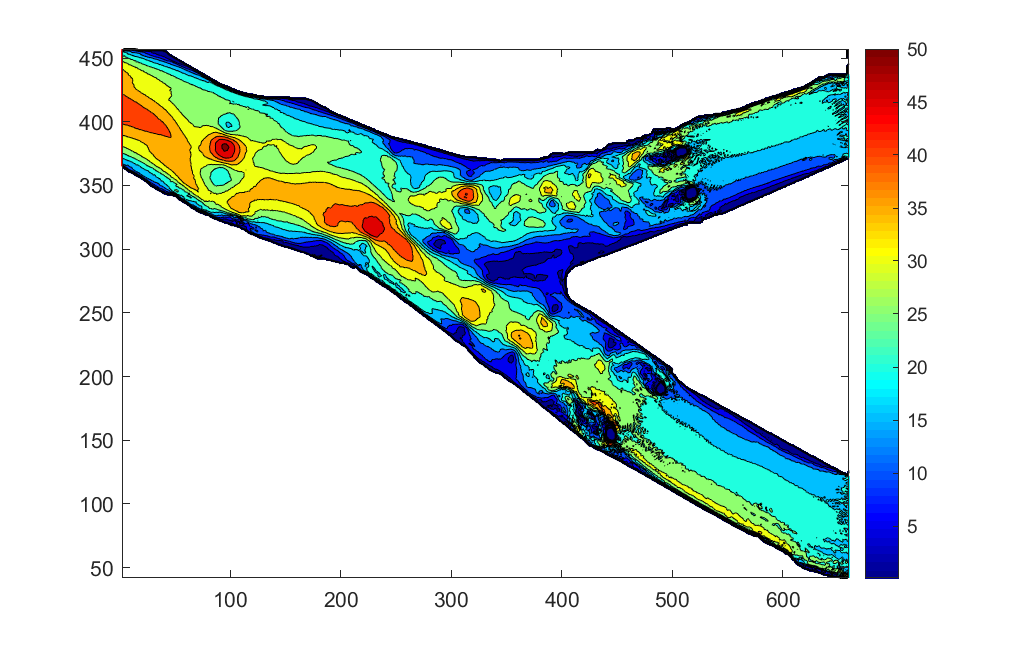}
    \includegraphics[scale=.4]{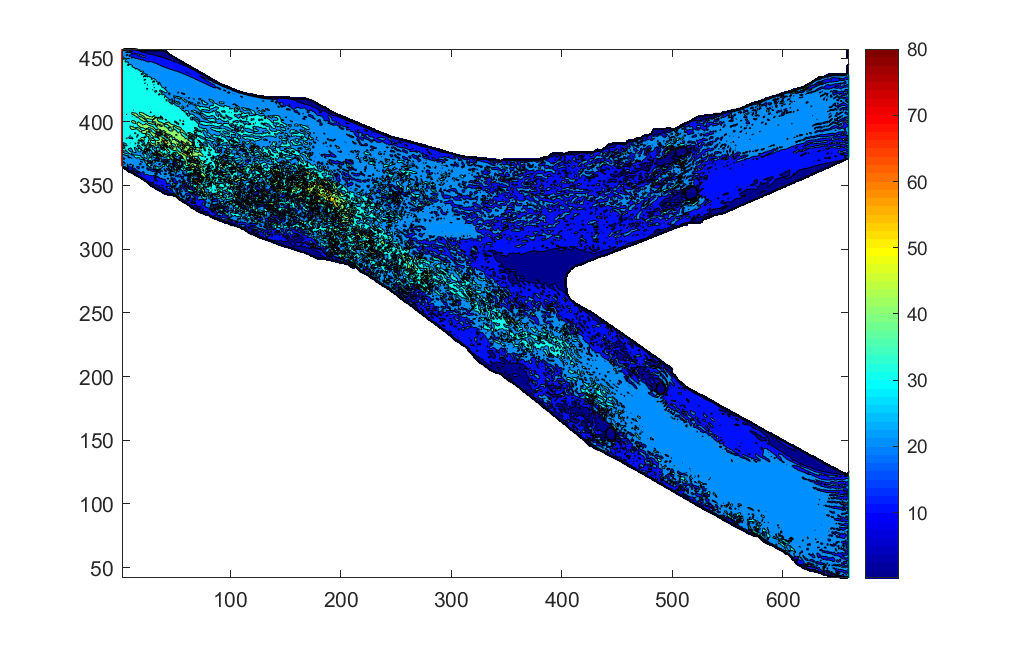}\\
    \caption{Velocity at times $t=3, \: 9,$ and 15 for EMAC (left) versus SKEW (right).}
    \label{fig:CF_velocity}
\end{figure}

\begin{figure}[h!]
    \centering
    \includegraphics[scale=.4]{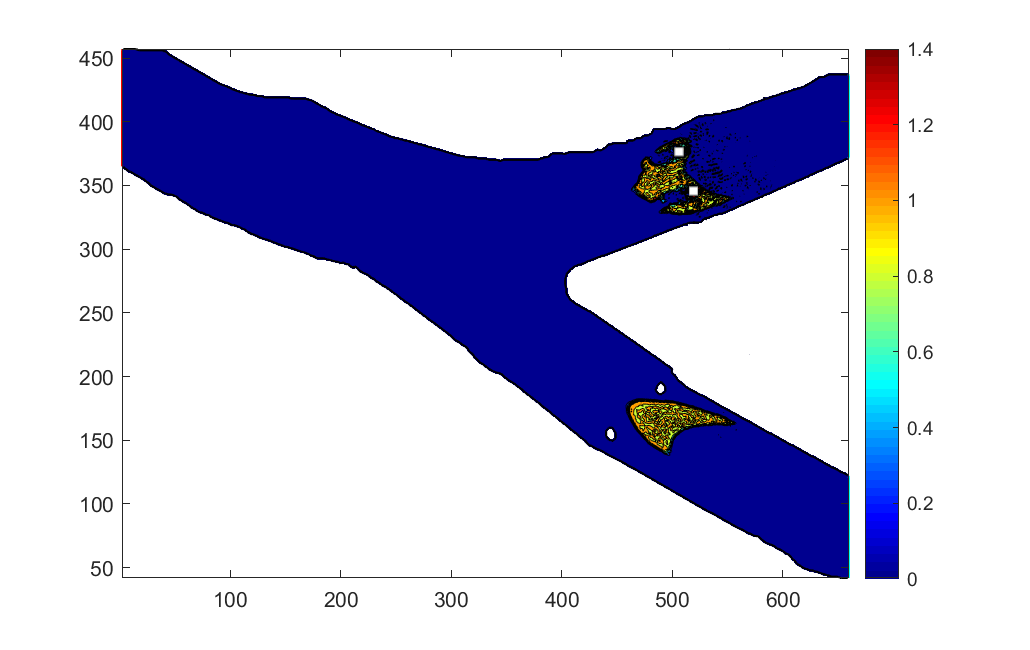}
    \includegraphics[scale=.4]{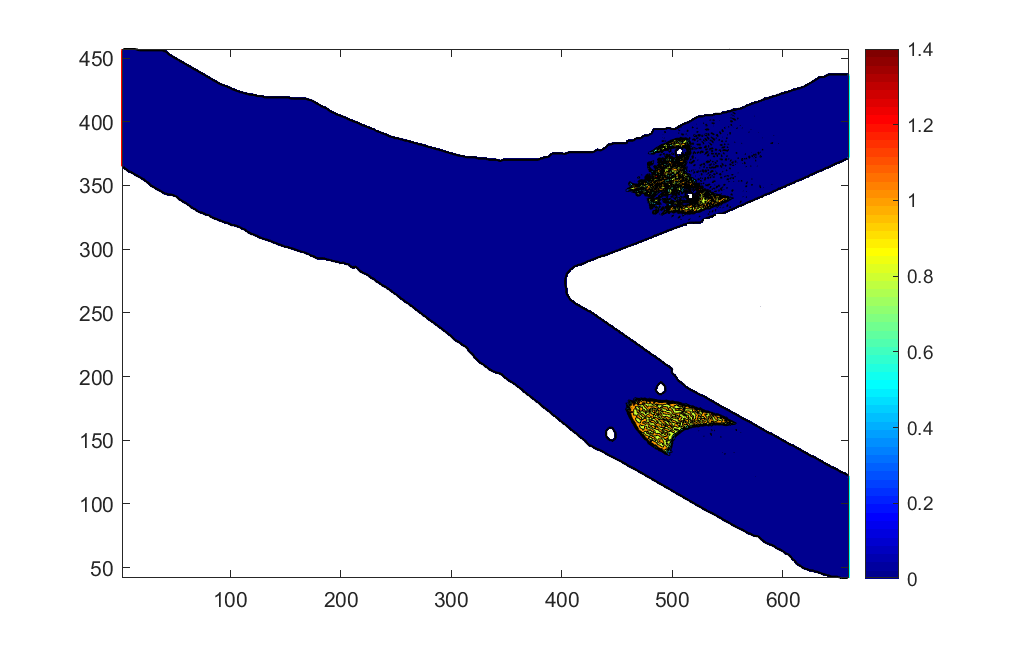}\\
    \includegraphics[scale=.4]{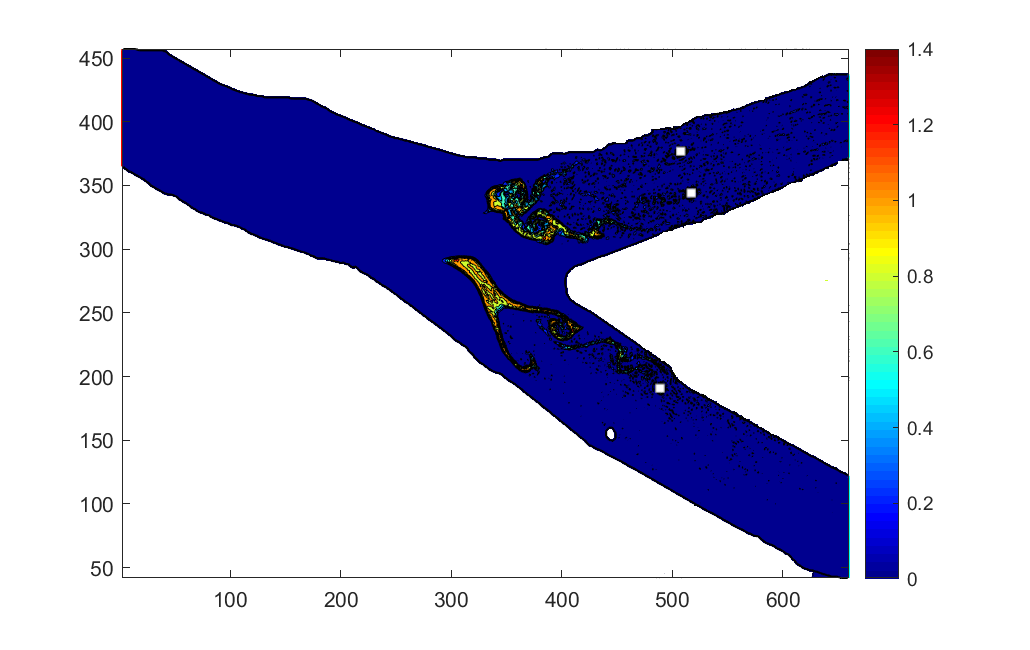}
    \includegraphics[scale=.4]{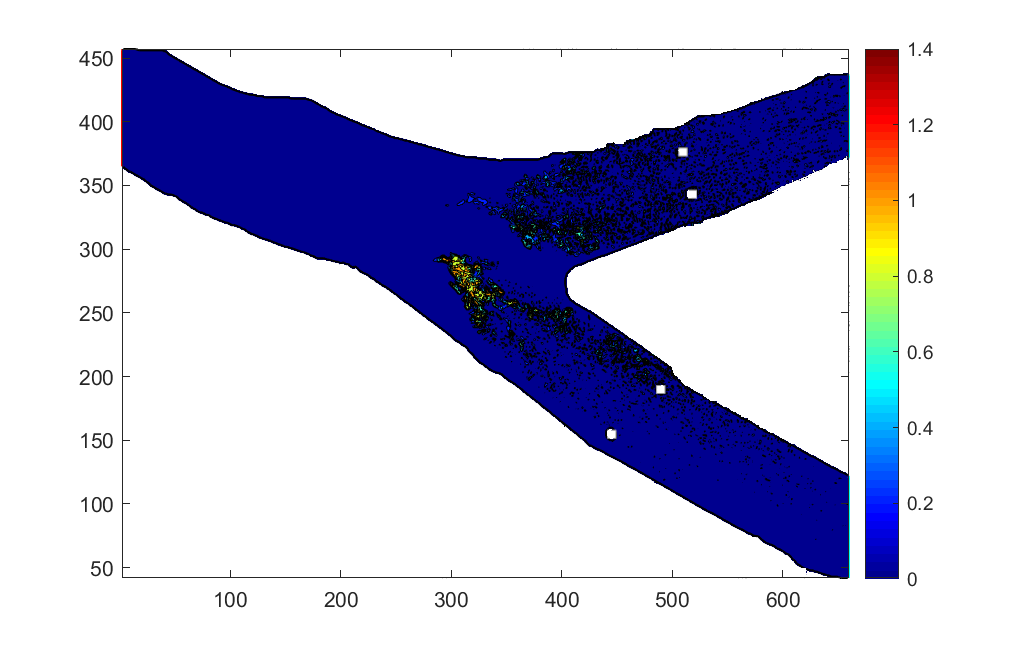}\\
    \includegraphics[scale=.4]{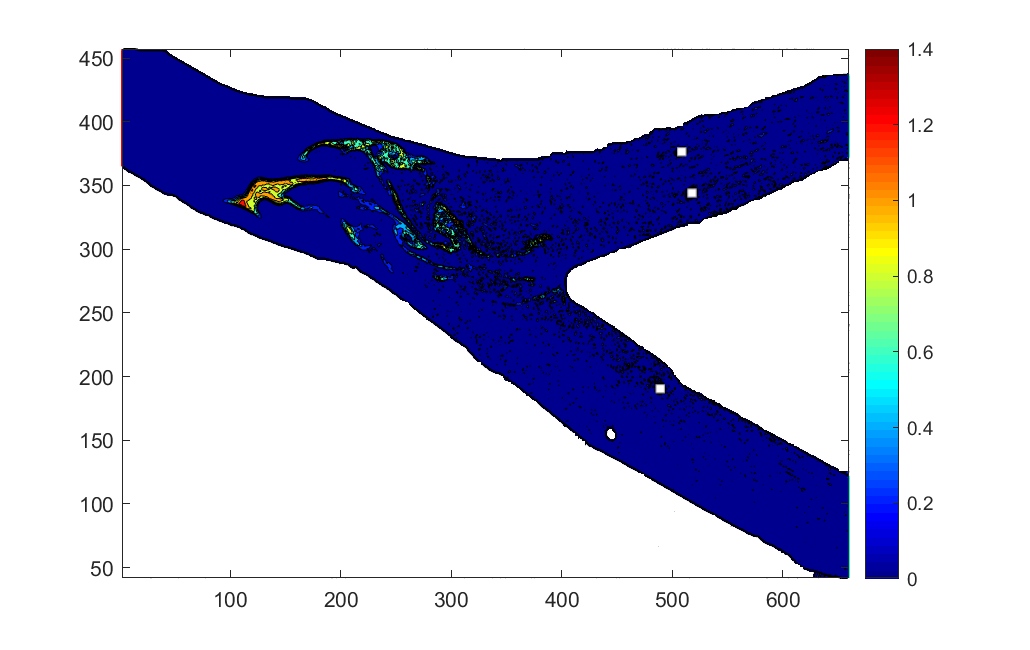}
    \includegraphics[scale=.4]{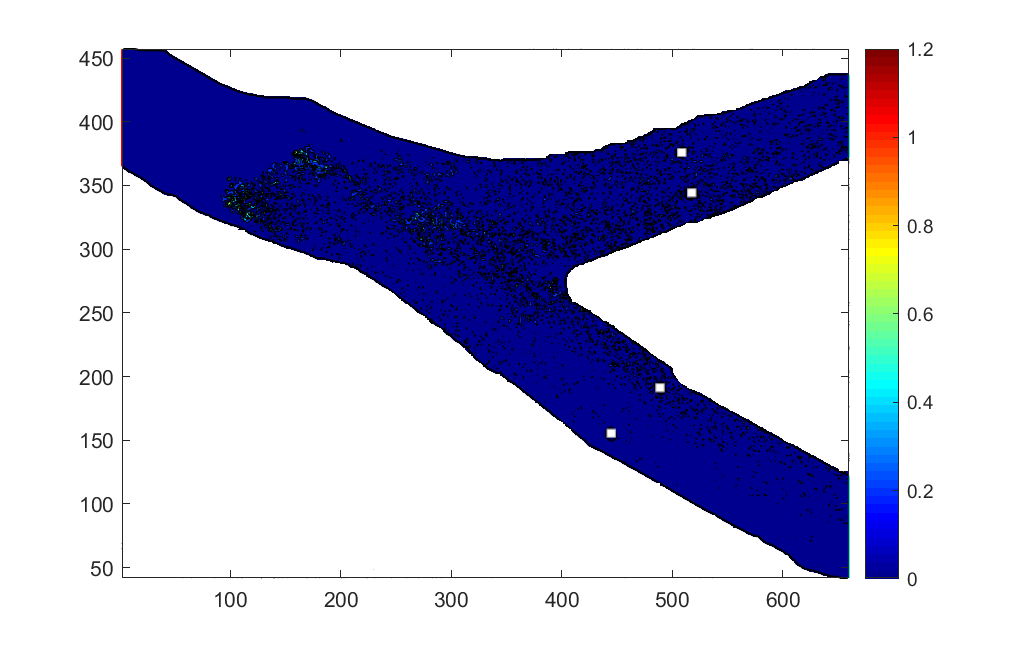}\\
    \caption{Contaminant flow at times $t=3, \: 9,$ and 15 for EMAC (left) versus SKEW (right).}
    \label{fig:CF_plots}
\end{figure}

\section{Conclusions}
\label{sec:conclusion}

We have extended the longer time accuracy analysis of EMAC to both fully discrete projection methods and coupled schemes.  Analysis showed the methods provided better conservation properties than the more commonly used SKEW methods, and that the Gronwall constant from the error bounds for EMAC is significantly reduced compared SKEW in that for EMAC they are not explicitly dependent on the inverse of the viscosity.  Several numerical tests backed up the analysis, and agreed with what is now found in many computational works since EMAC first appeared in the literature in 2017: EMAC performs better than the analogous (i.e. coupled, projection, etc.) method using SKEW, especially in longer time simulations.

\bibliographystyle{plain}
\bibliography{bib_file}

\end{document}